\newtheorem{theorem}{Theorem}
\newtheorem{lemma}[theorem]{Lemma}
\newtheorem{proposition}[theorem]{Proposition}
\newtheorem{corollary}[theorem]{Corollary}
\theoremstyle{definition}
\newtheorem{definition}[theorem]{Definition}
\newtheorem{example}[theorem]{Example}
\newtheorem{notation}[theorem]{Notation}
\newtheorem{remark}[theorem]{Remark}
\title{The Frobenius problem for generalized repunit numerical semigroups}
\author{Manuel B. Branco}
\address{Departamento de Matem\'aticas, Universidade de \'Evora, 7000-671 \'Evora, Portugal}
\email{mbb@uevora.pt}
\author{Isabel Colaço}
\address{Departamento de Matem\'atica e Ci\^encias F\'{\i}sicas, Instituto Polit\'ecnico de Beja, 7800-295 Beja, Portugal}
\email{isabel.colaco@ipbeja.pt}
\author{Ignacio Ojeda}
\address{Departamento de Matem\'aticas, Universidad de Extremadura, 06071 Badajoz, Spain}
\email{ojedamc@unex.es}
\thanks{This research was partially supported by the Ministerio de Ciencia, Innovación y Universidades
(Spain) /FEDER-UE under grants PGC2018-096446-B-C21 and MTM2017-84890-P, by the Junta de Extremadura(Spain)/FEDER funds, research group FQM-024,}
\subjclass[2010]{Primary: 13P10, 20M14, secondary: 52B20}
\keywords{Numerical Semigroup, Ap\'ery sets, Frobenius problem, genus, type, Wilf's conjecture}
\begin{document}

\begin{abstract}
In this paper, we introduce and study the numerical semigroups generated by $\{a_1, a_2, \ldots \} \subset \mathbb{N}$ such that $a_1$ is the repunit number in base $b > 1$ of length $n > 1$ and $a_i - a_{i-1} = a\, b^{i-2},$ for every $i \geq 2$, where $a$ is a positive integer relatively prime with $a_1$. These numerical semigroups generalize the repunit numerical semigroups among many others. We show that they have interesting properties such as being homogeneous and Wilf. Moreover, we solve the Frobenius problem for this family, by giving a closed formula for the Frobenius number in terms of $a, b$ and $n$, and compute other usual invariants such as the Ap\'ery sets, the genus or the type.
\end{abstract}

\maketitle

\section{Introduction}\label{Sect1}

Let $\mathbb{N}$ be the set of non-negative integers. A numerical semigroup $S$ is a subset of $\mathbb{N}$  containing zero which is closed under addition of natural numbers and such that $\mathbb{N} \setminus S$ is finite. The cardinality of $\mathbb{N} \setminus S$ is called the genus of $S$, denoted $\operatorname{g}(S)$.

Numerical semigroups have a unique finite minimal system of generators, that is, given a numerical semigroup $S$ there exists a unique set $\{a_1, \ldots, a_e\} \subset \mathbb{N}$ such that \[S = \mathbb{N} a_1 + \ldots + \mathbb{N} a_e\] and no proper subset of $\{a_1, \ldots, a_e\}$ generates $S$ (see \cite[Theorem 2.7]{libro}). In this case, the set $\{a_1, \ldots, a_e\}$ is the minimal system of generators of $S$, its cardinality is called the embedding dimension of $S$, denoted $\operatorname{e}(S)$, and $\min \{a_1, \ldots, a_e\}$ is called the multiplicity of $S$, denoted $\operatorname{m}(S)$. Notice that the finiteness of the genus implies that $\gcd(a_1, \ldots, a_e) = 1$. In fact, one has that the necessary and sufficient condition for a subset $\mathcal{A}$ of $\mathbb{N}$ to generate a numerical semigroup is $\gcd(\mathcal{A}) = 1$ (see, e.g., \cite[Lemma 2.1]{libro}).

Let $S$ be a numerical semigroup. Since $\mathbb{N} \setminus S$ is finite, there exists the greatest integer not in $S$ which is called the Frobenius number of $S$, denoted $\operatorname{F}(S)$. The so-called Frobenius problem deals with finding a closed formula for $\operatorname{F}(S)$ in terms of the minimal systems of generators of $S$, if possible (see, e.g. \cite{ramirez-alfonsin}).

Let $b$ be a positive integer greater than $1$. Set $a_1 = \sum_{j=0}^{n-1} b^j$ and consider the sequence $(a_i)_{i \geq 1}$ defined by the recurrence relation \[a_i - a_{i-1} = a\, b^{i-2},\ \text{for every}\ i \geq 2,\] where $a$ and $n$ are positive integers. In this paper, we study the numerical semigroups, $S_a(b,n)$, generated by $\{a_1, a_2, \ldots \},$ provided that $\gcd(a_1, a) = 1$. This last condition is necessary and sufficient for $S_a(b,n)$ to be a numerical semigroup (see Proposition \ref{prop4}). In this case, we say that $S_a(b,n)$ is a generalized repunit numerical semigroup as it generalizes the repunit numerical semigroups studied in \cite{RBT16} (see Example \ref{exa7}).

Clearly, the generalized repunit numerical semigroup $S_a(b,n)$ has multiplicity $a_1$. Moreover, by Theorem \ref{teo6}, we have that $S_a(b,n)$ is minimally generated by $\{a_1, \ldots, a_n\}$; in particular, the embedding dimension of $S_a(b,n)$ is $n$.

The main results in this paper are Theorem \ref{teo22}, which provides the following formula for the Frobenius number of $S_a(b,n)$:
\[
\operatorname{F}(S_a(b,n)) = \left\{
\begin{array}{ccc}
(n-1)\,\left(b^n-1-a\right)+a\, \left(\sum_{j=0}^{n-1} b^j\right) & \text{if} & a < b^n-1; \\
b^n-1-a + a\, \left(\sum_{j=0}^{n-1} b^j\right) & \text{if} & a > b^n-1,
\end{array}\right. 
\]
and Corollary \ref{cor26}, which gives the following formula for the genus of $S_a(b,n)$:
\[
\operatorname{g}(S_a(b,n)) = \frac{(n-1)\, b^n + a\, \left(\sum_{j=1}^{n-1} b^j\right)}2.
\]
To achieve these results we take advantage of Selmer's formulas, summarized in Proposition \ref{prop20}. These formulas depends on the Ap\'ery sets of $S_a(b,n)$. We explicitly compute the Ap\'ery set of $S_a(b,n)$ with respect to $a_1$ (Theorem \ref{teo15}). This is a result that may seem technical, however it reflects the internal structure of generalized repunit numerical semigroups. For instance, the Ap\'ery set of $S_a(b,i)$ can be obtained from the Ap\'ery set of $S_a(b,i-1)$, for every $i \geq 3$. This is Corollary \ref{cor19} whose statement is a stronger version of \cite[Theorem 3.3]{OP18} partially thanks to the fact that generalized repunit numerical semigroups are homogeneous in the sense of \cite{JZ18} (Proposition \ref{prop17}).

The last section of the paper is devoted to the computation of the pseudo-Frobenius numbers of $S_a(b,n)$. Concretely, using our results in Section \ref{Sect3}, we explicitly compute the whole set of pseudo-Frobenius numbers of $S_a(b,n)$ and we obtain that its cardinality is $n-1$ (Proposition \ref{prop29}). So, we prove that the type of $S_a(b,n)$ is equal to $n-1$ which implies that $S_a(b,n)$ is Wilf (see Section \ref{Sect5} for further details).

Generalized repunit numerical semigroups have other interesting properties. Without going further, using Gr\"obner basis techniques, in \cite{BCO2} it is proved that the toric ideal associated to $S_a(b,n)$ is determinantal and that the cardinal of any minimal presentation of $S_a(b,n)$ is $\binom{n}2$. Moreover, following \cite{OjVi2}, the authors prove that, for $n > 3$, generalized repunit numerical semigroups are uniquely presented, in the sense of \cite{GSOj}, if and only if $a < b-1$.

Finally, we note that our results are also valid for the case $b = 1$. In this case, $a_i = n+(i-1)\, a,\ i \geq 1$, is an arithmetic sequence that generates a MED semigroup, provided that $\gcd(n,a)=1$. These semigroups are widely known (see, e.g. \cite [Section 3]{libro}); for this reason and for the sake of simplicity, we consider $b > 1$; so that $a_1$ is properly a repunit number.

\section{Generalized repunit numerical semigroups}\label{Sect2}

Let $b > 1$ be a positive integer. 

\begin{definition}\label{def1}
A repunit number in base $b$ is an integer whose representation in base $b$ contains only the digit $1$.
\end{definition}

We write $r_b(\ell)$ for the repunit number in base $b$ of length $\ell$, that is, \[r_b(\ell) = \sum_{j=0}^{\ell-1} b^j.\] By convention, we assume $r_b(0) = 0$.

\begin{example}
The first six repunit numbers in base $2$ are $1, 3, 7, 15, 31, 63 \ldots$, whereas the first six repunit numbers in base $3$, are $1, 4, 13, 40, 121, 364 \ldots$. Observe that repunit numbers in base $2$ are the Mersenne numbers.
\end{example}

Here and in what follows, $a$ and $n$ denote two positive integers. 

\begin{notation}\label{not3}
Set $a_i := r_b(n) + a\, r_b(i-1),\ i \geq 1$. Observe that $a_1 = r_b(n)$ and $a_i - a_{i-1} = a\, b^{i-2},$ for every $i \geq 2.$. We write $S_a(b,n)$ for the submonoid of $\mathbb{N}$ generated by $a_i,\ i \geq 1$. 
\end{notation}

If $n=1,$ then $a_1 = 1$ and therefore $S_a(b,n) = \mathbb{N}$. So, in the following we assume that $n > 1$.

\begin{proposition}\label{prop4}
$S_a(b,n)$ is a numerical semigroup if and only if $\gcd(r_b(n),a) = 1$.
\end{proposition}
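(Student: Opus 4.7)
The claim reduces, via the standard characterization quoted from \cite[Lemma 2.1]{libro}, to showing that
\[
\gcd\bigl(a_i : i \geq 1\bigr) = \gcd(r_b(n), a).
\]
So my plan is to establish this identity of greatest common divisors, from which both implications of the biconditional follow immediately.

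For the ``easy'' direction, I would observe that every generator has a transparent common factor: writing $a_i = r_b(n) + a\, r_b(i-1)$, any integer $d$ dividing both $r_b(n)$ and $a$ clearly divides each $a_i$. So if $\gcd(r_b(n),a) > 1$, then $\gcd(a_i : i \geq 1) > 1$, and therefore $S_a(b,n)$ cannot be a numerical semigroup.

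For the converse, the main point is that only the first two generators are needed: since $a_1 = r_b(n)$ and $a_2 = r_b(n) + a$ (taking $i=2$ in the recurrence and using $r_b(1)=1$, or equivalently from $a_2 - a_1 = a\, b^0 = a$), we get
\[
\gcd(a_1, a_2) \;=\; \gcd\bigl(r_b(n),\, r_b(n)+a\bigr) \;=\; \gcd(r_b(n), a).
\]
If this last gcd equals $1$, then already the two-element subset $\{a_1,a_2\}$ has trivial gcd, so by \cite[Lemma 2.1]{libro} the submonoid it generates (and hence the larger $S_a(b,n)$) is a numerical semigroup.

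There is no serious obstacle here: both directions are one-line gcd manipulations. The only subtlety worth flagging is that the ``if'' direction actually shows something a bit stronger, namely that $\gcd(r_b(n),a)=1$ suffices for $S_a(b,n)$ to have finite complement in $\mathbb{N}$ already at the level of its first two generators, a fact that is consistent with the upcoming assertion (Theorem~\ref{teo6}) that the minimal generating set is $\{a_1,\ldots,a_n\}$ rather than the full infinite family.
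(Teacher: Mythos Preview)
Your proof is correct and follows essentially the same approach as the paper: both directions hinge on the observation that $\gcd(r_b(n),a)$ divides every $a_i$ (for the ``only if'' direction) and that $\gcd(a_1,a_2)=\gcd(r_b(n),a)$ (for the ``if'' direction), so the first two generators already suffice to witness finite complement. The paper's wording is slightly more concise but the mathematical content is identical.
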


\begin{proof}
Let $d = \gcd(r_b(n),a)$. By definition, $S_a(b,n) \subseteq d\, \mathbb{N}$. Now, if  $S_a(b,n)$ is a numerical semigroup, then $\mathbb{N} \setminus d\, \mathbb{N} \subset \mathbb{N} \setminus S_a(b,n)$ has finitely many elements, and hence $d = 1$. Conversely, if $\gcd(r_b(n),a) = 1$, then $\gcd(a_1, a_2) = \gcd(r_b(n), r_b(n)+a) = \gcd(r_b(n),a) = 1$. So, $a_1 \mathbb{N} + a_2 \mathbb{N}$ is a numerical semigroup containing $S_a(b,n)$. Therefore $\mathbb{N} \setminus S_a(b,n) \subseteq \mathbb{N} \setminus (a_1 \mathbb{N} + a_2 \mathbb{N})$  has finitely many elements, that is to say, $S_a(b,n)$ is a numerical semigroup. 
\end{proof}

Let us prove that if $\gcd(r_b(n),a) = 1$, then $\{a_1, \ldots, a_n\}$ is the minimal generating set of $S_a(b,n)$. We begin with a useful lemma.  

\begin{lemma}\label{lema5}
The following equality holds: $a_{n+i} = a_i + a\, b^{i-1}\, a_1,$ for all $i \geq 1$.
\end{lemma}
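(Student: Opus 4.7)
The plan is to reduce the identity to a short manipulation of repunit sums using the closed form $a_i = r_b(n) + a\, r_b(i-1)$ from Notation \ref{not3}. Substituting, the difference $a_{n+i} - a_i$ collapses to $a\bigl(r_b(n+i-1) - r_b(i-1)\bigr)$, since the $r_b(n)$ summands cancel. The heart of the matter is then to recognize the repunit difference as a shifted repunit.

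Concretely, I would expand
\[
r_b(n+i-1) - r_b(i-1) = \sum_{j=0}^{n+i-2} b^j - \sum_{j=0}^{i-2} b^j = \sum_{j=i-1}^{n+i-2} b^j,
\]
and then factor out $b^{i-1}$ to rewrite this last sum as $b^{i-1}\sum_{j=0}^{n-1}b^j = b^{i-1}\, r_b(n) = b^{i-1}\, a_1$. Multiplying by $a$ gives $a_{n+i} - a_i = a\, b^{i-1}\, a_1$, which is exactly the stated identity.

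As a sanity check (or as an alternative route) one can instead telescope the recurrence $a_k - a_{k-1} = a\, b^{k-2}$ from $k=i+1$ to $k=n+i$ to obtain
\[
a_{n+i} - a_i = a\sum_{k=i+1}^{n+i} b^{k-2} = a\, b^{i-1}\sum_{\ell=0}^{n-1}b^\ell = a\, b^{i-1}\, a_1,
\]
which yields the same conclusion. There is no real obstacle here: the step most worth being careful about is the index bookkeeping when shifting summation ranges, since an off-by-one error would change the exponent of $b$. Because both derivations give the same clean answer via independent routes, I am confident in the formula and would present the first (direct substitution) proof for brevity.
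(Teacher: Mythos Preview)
Your proof is correct and follows essentially the same route as the paper: both reduce to the repunit identity $r_b(n+i-1) = r_b(i-1) + b^{i-1}\, r_b(n)$ and then substitute into $a_k = r_b(n) + a\, r_b(k-1)$. The paper states this identity in one line without writing out the sum manipulation, but your expanded version and telescoping sanity check are fine elaborations of the same idea.
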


\begin{proof}
It suffices to observe that $r_b(n+i-1) = r_b(i-1) + b^{i-1}\, r_b(n)$, for all $i \geq 1$, and, consequently, that $a_{n+i} = a_1 + a\, r_b(n+i-1) = a_1 + a\, ( r_b(i-1) + b^{i-1} r_b(n)) = a_i + a\, b^{i-1}\, a_1$, for all $i \geq 1$.
\end{proof}

Observe that the previous lemma already implies that $\{a_1, \ldots, a_n\}$ generates $S_a(b,n)$. So, it remains to see that $\{a_1, \ldots, a_n\}$ is minimal for the inclusion.
In this case, by \cite[Theorem 2.7]{libro}, $\{a_1, \ldots, a_n\}$ will be the (unique) minimal system of generators of $S_a(b,n)$.

\begin{theorem}\label{teo6}
If $S_a(b,n)$ is a numerical semigroup, then $\{a_1, \ldots, a_n\}$ is the minimal system of generators of $S_a(b,n)$. In particular, the embedding dimension of $S_a(b,n)$ is $n$.
\end{theorem}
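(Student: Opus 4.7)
The plan has two parts. First, I show that $\{a_1,\dots,a_n\}$ generates $S_a(b,n)$; then I show it is minimal for inclusion.

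For generation, I would argue by strong induction on $i$ that $a_i \in \mathbb{N}a_1 + \dots + \mathbb{N}a_n$ for every $i \geq 1$. The base cases $i=1,\dots,n$ are immediate. For $i > n$, write $i = n + i'$ with $i' \geq 1$ and apply Lemma \ref{lema5} to obtain $a_i = a_{i'} + a\, b^{i'-1}\, a_1$. Both $a_{i'}$ (by the inductive hypothesis, since $i' < i$) and $a_1$ lie in $\mathbb{N}a_1 + \dots + \mathbb{N}a_n$, so $a_i$ does too.

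For minimality, suppose by contradiction that some $a_i$, $1 \leq i \leq n$, admits an expression $a_i = \sum_{j\neq i} k_j a_j$ with $k_j \in \mathbb{N}$ and indices in $\{1,\dots,n\}$. Using $a_j = a_1 + a\,r_b(j-1)$ and letting $K = \sum_{j\neq i} k_j$, this becomes
\[
a_1 + a\, r_b(i-1) \;=\; K\, a_1 \;+\; a\sum_{j=2,\, j\neq i}^{n} k_j\, r_b(j-1),
\]
which rearranges to $a\cdot T = (K-1)\,a_1$, where $T := r_b(i-1) - \sum_{j=2,\,j\neq i}^n k_j\, r_b(j-1)$. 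Since $\gcd(a,a_1)=1$ by Proposition \ref{prop4}, $a_1$ divides $T$, so $T = m\, a_1$ for some $m \in \mathbb{Z}$, and then $K = 1 + a\,m$.

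The main (and essentially the only) obstacle is ruling out each possible value of $m$. The key size estimate is $T \leq r_b(i-1) \leq r_b(n-1) < r_b(n) = a_1$, which immediately excludes $m \geq 1$. The condition $K \geq 0$ forces $a\,m \geq -1$, which excludes $m \leq -1$ except in the degenerate subcase $a=1,\, m=-1$, where $K=0$ gives $a_i = 0$, absurd. Hence $m = 0$, i.e.\ $T = 0$ and $K = 1$: exactly one coefficient $k_{j_0}$ equals $1$. If $j_0 = 1$ then $T = r_b(i-1) > 0$ (using $i \geq 2$, which we may assume since $a_1$ is the multiplicity and trivially belongs to every generating set), contradicting $T=0$; if $j_0 \geq 2$ then $T=0$ forces $r_b(i-1) = r_b(j_0-1)$, hence $i = j_0$, contradicting $k_i = 0$. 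This completes the argument, and $\operatorname{e}(S_a(b,n))=n$ follows from \cite[Theorem 2.7]{libro}.
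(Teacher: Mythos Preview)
Your proof is correct and follows essentially the same approach as the paper: both expand $a_j = a_1 + a\,r_b(j-1)$, exploit $\gcd(a_1,a)=1$ to obtain a congruence on the coefficient sum (your divisibility statement $a_1\mid T$ is equivalent to the paper's $\sum u_j\equiv 1\pmod a$), and then finish with a size comparison against $a_1$. Your presentation is slightly more unified in that it handles all indices $j\neq i$ at once and avoids the paper's separate case split between $a=1$ and $a>1$.
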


\begin{proof}
By Lemma \ref{lema5}, we have that $\{a_1, \ldots, a_n\}$ is a system of generators of $S_b(a,n)$. Now, since $a_1 < \ldots < a_n$, to see the minimality property, it suffices to prove that $a_i \not\in \langle a_1, \ldots, a_{i-1}\rangle$ for every $i \in \{2, \ldots, n\}$. By the condition $\gcd(a_1, a_2) = \gcd(a_1, a) = 1$ this is true for $i=2$. Also when $a=1$, we have that $a_i - a_k = r_b(i-1)-r_b(k-1) = \sum_{j=k-1}^{i-2} b^j < a_1,$ for every $k \leq i$, and consequently, $a_i \not\in \langle a_1, \ldots, a_{i-1}\rangle$. 

So, from now on we assume $a > 1$ and $i \in \{3, \ldots, n\}$. If $a_i \in \langle a_1, \ldots, a_{i-1}\rangle$, then there exist $u_1, \ldots, u_{i-1} \in \mathbb{N}$ such that $a_i = \sum_{j=1}^{i-1} u_j a_j$. Therefore, $a_i = a_1 + a\, r_b(i-1)$ is equal $\big(\sum_{j=1}^{i-1} u_j\big) a_1 + a\, \sum_{j=1}^{i-1} u_j r_b(j-1)$ and thus \[\left(\sum_{j=1}^{i-1} u_j\right) a_1 \equiv a_1\ (\!\!\!\!\!\mod a ).\] 

Now, since $S_a(b,n)$ is a numerical semigroup, by Proposition \ref{prop4}, we have $\gcd(a_1,a) = \gcd(r_b(n),a) = 1$, and we conclude that $\sum_{j=1}^{i-1} u_j \equiv 1\ (\!\!\! \mod a)$. If $\sum_{j=1}^{i-1} u_j  = 1$, then there exists $k \in \{1, \ldots, i-1\}$ such that $u_k = 1$ and $u_j = 0$ for every $j \neq k$, that is to say, $a_i = a_k$ which is not possible because $k < i$. Thus, there exists a positive integer $N$ such that $\sum_{j=1}^{i-1} u_j = 1 + N\, a$. Therefore, $a_i = (1+N\,a)\, a_1 \geq (1+a)\, a_1 = a_{n+1}$, where the last equality follows from Lemma \ref{lema5}. However, this inequality implies $i \geq n+1$, in contradiction to our assumption.
\end{proof}

We emphasize that the hypothesis $S_a(b,n)$ is a numerical semigroup (equivalently, $\gcd(a_1, \ldots, a_n)=1$) cannot be avoided for the minimality property of $\{a_1, \ldots, a_n\}$; for example, if $b=2, a=5$ and $n = 4,$ we have that $a_1 = 15, a_2 = 20, a_3 = 30$ and $a_4 = 50$, clearly, $a_1$ and $a_2$ are suffice to generate $S_a(b,n)$, in this case.

Here and throughout this section, we suppose $\gcd(r_b(n),a) = 1$ so that $S_a(b,n)$ is a numerical semigroup with multiplicity $a_1$ and, by Theorem \ref{teo6}, of embedding dimension $n$. We call these semigroups generalized repunit numerical semigroups or grepunit semigroups for short. 

\begin{example}\label{exa7}\mbox{}
\begin{enumerate}[i)]
\item If $a = 2^n, a_1 = 2^n-1$ and $b = 2$, then $S_a(b,n)$ is a Merssene numerical semigroup (see \cite{RBT17}).
\item If $a = b^n$, then $S_a(b,n)$ is a repunit numerical semigroup (see \cite{RBT16}).
\end{enumerate}
\end{example}

The numerical semigroups in Example \ref{exa7} are part of the larger family of those numerical semigroups which are closed respect to the action of affine maps. A numerical semigroup $S$ is said to be closed respect to the action of an affine map if there exists $\alpha \in \mathbb{N} \setminus \{0\}$ and $\beta \in \mathbb{Z}$ such that $\alpha s + \beta \in S,$ for every $s \in S \setminus \{0\}$.

\begin{corollary}\label{cor8}
$S_a(b,n)$ is closed by the action of the affine map $x \mapsto b\, x + a-(b^n-1)$.
\end{corollary}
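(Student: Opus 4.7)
The plan is to establish the property first on the infinite sequence $\{a_i\}_{i\geq 1}$ of (non-minimal) generators of $S_a(b,n)$, and then extend to arbitrary elements by closure under addition. Writing $\phi(x) = b\,x + a - (b^n-1)$, the key lemma I would prove is $\phi(a_i) = a_{i+1}$ for every $i \geq 1$. This boils down to two easy repunit identities: $b\,r_b(n) = r_b(n) + b^n - 1$ (so the shift $-(b^n-1)$ is exactly cancelled) and $b\,r_b(i-1) + 1 = r_b(i)$ (so the coefficient of $a$ advances by one index). Plugging these into the definition $a_i = r_b(n) + a\,r_b(i-1)$ collapses $\phi(a_i)$ to $r_b(n) + a\,r_b(i) = a_{i+1}$.

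For an arbitrary nonzero $s \in S_a(b,n)$, I would fix any decomposition $s = \sum_i u_i a_i$, pick an index $k$ with $u_k \geq 1$, and set $s' := s - a_k \in S_a(b,n)$ (possibly zero). Then
\[
\phi(s) \;=\; b\,s + a - (b^n-1) \;=\; \bigl(b\,a_k + a - (b^n-1)\bigr) + b\,s' \;=\; a_{k+1} + b\,s'.
\]
Both summands lie in $S_a(b,n)$: the first because $a_{k+1}$ belongs to the generating sequence (directly if $k+1 \leq n$, and via Lemma~\ref{lema5} otherwise), and the second because the positive-integer multiple $b\,s' = s' + \cdots + s'$ (with $b$ summands) is a finite sum of elements of the semigroup. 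Adding the two gives $\phi(s) \in S_a(b,n)$, as required.

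I do not foresee any serious obstacle. The only step with any real content is spotting that the shift $-(b^n-1)$ is precisely the correction needed for $b\,a_i + a$ to become $a_{i+1}$; once this is observed, the rest follows from the elementary fact that a numerical semigroup is closed under both addition and multiplication by any positive integer.
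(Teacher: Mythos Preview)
Your proof is correct and follows essentially the same approach as the paper: both establish the key identity $\phi(a_j)=a_{j+1}$ via the same repunit arithmetic, then for a general nonzero $s$ peel off one generator $a_k$ from a decomposition and observe that $\phi(s)=a_{k+1}+b\,(s-a_k)\in S_a(b,n)$. The only cosmetic difference is that the paper writes the final expression explicitly over the minimal generators $a_1,\ldots,a_n$ rather than using your shorthand $s'=s-a_k$.
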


\begin{proof}
We first, observe that \begin{align*} 
b\, a_j + a-(b^n-1) & = b\, a_j + a - (b-1) r_b(n) = \\
&  = b\, r_b(n) - (b-1)\, r_b(n) + a\, b\, r_b(j-1) + a = \\ & = r_b(n) + a\, b\, r_b(j-1) + a = r_b(n) + a\, r_b(j) = \\ & = a_{j+1},\end{align*} for every $j \geq 1$. 

Now, since $\{a_1, a_2, \ldots, a_n\}$ generates $S_a(b,n)$, given $s \in S \setminus \{0\}$, there exist $u_i \in \mathbb{N},\ i = 1, \ldots, n,$ with $u_j \neq 0$ for some $j$, such that $s = \sum_{i=1}^n u_i a_i$. Therefore
\begin{align*}
b\, s + a-(b^n-1) & = \sum_{i=1}^n (u_i b)\, a_i + a-(b^n-1) = \\
& = \sum_{\substack{i=1 \\ i \neq j}}^{n} (u_i b)\, a_i + (u_j b)\, a_j + a-(b^n-1) = \\ 
& = \sum_{\substack{i=1 \\ i \neq j}}^{n} (u_i b)\, a_i + \big((u_j-1) b\big)\, a_j +  b\, a_j + a-(b^n-1) = \\ & = \sum_{\substack{i=1 \\ i \neq j}}^{n} (u_i b)\, a_i + \big((u_j-1) b\big)\, a_j +  a_{j+1} \in S,
\end{align*}
as claimed.
\end{proof}

In \cite{Ugo17}, numerical semigroups which are closed respect to the action of the affine maps $x \mapsto \alpha x + \beta$, with $\alpha \in \mathbb{N} \setminus \{0\}$ and $\beta \in \mathbb{N}$, are studied. Therefore, by Corollary \ref{cor8}, the grepunit semigroup $S_a(b,n)$ belongs to the family studied in \cite{Ugo17} if and only if $a - (b^n-1) > 0$; equivalently, $a > b^n-1$.

\begin{remark}\label{rem9}
Grepunit semigroups could be seen also as shifted numerical monoids in the sense of \cite{OP18}; since, by Theorem \ref{teo6}, $S_a(b,n)$ is minimally generated by \[ \{a_1, a_1 +  a\, r_b(1), \ldots, a_1 +  a\, r_b(n-1)\}.\] Nevertheless, the hypothesis $r_b(n) = a_1 > a^2\, r_b(n-1)^2$ required in \cite{OP18} only holds for grepunit semigroups when $n=2$ and $a^2 < b+1$. Indeed, $r_b(n) > a^2\, r_b(n-1)^2$ if and only if \[a^2 < \frac{r_b(n)}{r_b(n-1)^2} = \frac{1+b\, r_b(n-1)}{r_b(n-1)^2} = \frac{1}{r_b(n-1)^2} + \frac{b}{r_b(n-1)}.\] Now, as the right hand side is either $1+b$, if $n=2$, or less than $(1+2b)/(1+b)^2 < 1$, otherwise, we are done.
\end{remark}

To finish this section, we make explicit a set of relations which characterizes $S_a(b,n)$.

\begin{lemma}\label{lema10}
For each pair of integers $i \geq 1$ and $j \geq 1$, it holds that $b\, a_i + a_{i+j} = b\, a_{i+j-1} + a_{i+1}$.
\end{lemma}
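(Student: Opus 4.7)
The plan is to reduce the stated identity to a simple identity about repunits, using the closed form $a_k = r_b(n) + a\, r_b(k-1)$ from Notation \ref{not3}. Expanding both sides, the $r_b(n)$ contributions cancel symmetrically (each side has a total coefficient $b+1$ on $r_b(n)$), and after dividing by $a$ the identity becomes
\[
b\, r_b(i-1) + r_b(i+j-1) \;=\; b\, r_b(i+j-2) + r_b(i).
\]
So the whole lemma reduces to verifying this repunit identity.

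To verify it, I would invoke the one-line fact $r_b(k) = 1 + b\, r_b(k-1)$, which is immediate from $r_b(k) = 1 + b + b^2 + \cdots + b^{k-1}$. Applying it with $k = i$ on the right and with $k = i+j-1$ on the left, both sides become $1 + b\, r_b(i-1) + b\, r_b(i+j-2)$, so they coincide.

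As an alternative (which avoids rewriting repunits), one can argue directly from the defining recurrence $a_k - a_{k-1} = a\, b^{k-2}$: the telescoping sum gives $a_{i+j-1} - a_i = a\, b^{i-1}\, r_b(j-1)$, so
\[
(b\, a_i + a_{i+j}) - (b\, a_{i+j-1} + a_{i+1}) = -(b-1)(a_{i+j-1}-a_i) + a\, b^{i-1}(b^{j-1}-1),
\]
and substituting $a_{i+j-1}-a_i$ shows the right-hand side vanishes because $(b-1)\, r_b(j-1) = b^{j-1}-1$.

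There is really no obstacle here; the statement is a bookkeeping identity. The only thing to be careful about is the indexing in the repunit formula (in particular, the convention $r_b(0)=0$ handling the boundary case $j=1$, where both sides collapse to $b\, a_i + a_{i+1} = b\, a_i + a_{i+1}$ trivially). I would therefore present the first approach, since it is the shortest and makes transparent why the identity is really a manifestation of $r_b(k) = 1 + b\, r_b(k-1)$.
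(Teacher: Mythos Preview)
Your proposal is correct and follows essentially the same route as the paper. Both arguments expand the $a_k$ via $a_k = r_b(n) + a\, r_b(k-1)$ and then invoke the one-step repunit identity $r_b(k) = 1 + b\, r_b(k-1)$; the only cosmetic difference is that the paper first rewrites $a_{i+j} = a_i + a\, b^{i-1} r_b(j)$ and regroups from left to right, whereas you reduce both sides symmetrically to the repunit identity $b\, r_b(i-1) + r_b(i+j-1) = b\, r_b(i+j-2) + r_b(i)$.
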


\begin{proof}
Since $a_{i+j} = a_1 + a\, r_b(i+j-1) = a_1 + a\, (r_b(i-1) +  b^{i-1}\, r_b(j)) = a_i + a\, b^{i-1}\, r_b(j)$, for every $j$, we have that \begin{align*}
b\, a_i + a_{i+j} & = b\, a_i  + a_i + a\, b^{i-1}\, r_b(j)= \\ & = b\, a_i  + a_i + a\, b^ {i-1}\, (b\, r_b(j-1)+1) = \\ & = b\, (a_i + a\, b^{i-1}\, r_b(j-1)) + a_i + a\, b^{i-1} = \\ & =  b\, a_{i+j-1} + (a_1 + a\, r_b(i-1)) + a\, b^{i-1} = \\ & =  b\, a_{i+j-1} + a_{i+1},
\end{align*}
as claimed.
\end{proof}

Let us delve into what it is said in Lemma \ref{lema10}. Consider the subgroup $\mathcal{L}$ of $\mathbb{Z}^n$ generated by the rows of the $(n-1) \times n-$matrix 
\[ A = \left(\begin{array}{cccccccc} b & -(b+1) & 1 & 0 & \ldots & 0 & 0 & 0 \\ 
0 & b & -(b+1) & 1 & \ldots & 0 & 0 & 0  \\ 
\vdots & \vdots & \vdots & \vdots &  & \vdots & \vdots & \vdots\\ 
0 & 0 & 0 & 0 & \ldots & b & -(b+1) & 1 \\
(a+1) & 0 & 0 & 0 & \ldots & 0 & b & -(b+1)
\end{array}\right).\]
We first observe that, by Lemma \ref{lema5}, all the equalities in Lemma \ref{lema10} can be written in the form \[\mathbf{v}\, A \left(\begin{array}{c} a_1 \\ \vdots \\ a_n \end{array}\right) = 0\] for some $\mathbf{v} \in \mathbb{Z}^{n-1}$. Moreover, taking into account that, by Lemma \ref{lema5} again, $a_{n+1} = (a+1)\, a_1$, a direct computation shows that the maximal minors of $A$ are $-a_1, a_2, -a_3, \ldots, (-1)^n a_n.$ So, $\mathbb{Z}^n/\mathcal{L}$ is a group of rank $n-1$ which is torsion free if and only if $\gcd(a_1, \ldots, a_n) =1$; equivalently, $S_a(b,n)$ is a numerical semigroup by Proposition \ref{prop4}. Thus, in this case, we have that the semigroup homomorphism \[S_a(b,n) \longrightarrow \mathbb{N}^n/\mathcal{L};\quad s = \sum_{i=1}^n u_i a_i \longmapsto (u_1, \ldots, u_n) + \mathcal{L},\] is an isomorphism; that is to say, $S_a(b,n)$ is the finitely generated commutative monoid corresponding to the congruence $\sim$ on $\mathbb{N}^n$ defined by $\mathbf{u} \sim \mathbf{v} \Longleftrightarrow \mathbf{u} - \mathbf{v} \in \mathcal{L}$.

Now, as an straightforward consequence of the results in \cite[Section 7.1]{MS05} we conclude the following.

\begin{corollary}\label{cor11}
Let $\Bbbk$ be a field. The semigroup ideal of $\Bbbk[x_1, \ldots, x_n]$ associated to $S_a(b,n)$ is equal the ideal $I_\mathcal{L}$ generated by \begin{equation}\label{ecu1}
\{x_1^{u_1} \cdots x_n^{u_n} - x_1^{v_1} \cdots x_n^{v_n}\ \mid\ u_i-v_i \in  \mathcal{L},\ i = 1, \ldots, n\}.
\end{equation}
\end{corollary}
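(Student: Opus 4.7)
The plan is to derive the statement from the standard identification of toric ideals with saturated lattice ideals recorded in \cite[Section~7.1]{MS05}. By definition, the semigroup ideal of $\Bbbk[x_1,\ldots,x_n]$ associated to $S_a(b,n)$ is the kernel of the $\Bbbk$-algebra homomorphism $\varphi : \Bbbk[x_1,\ldots,x_n] \to \Bbbk[t]$ sending $x_i \mapsto t^{a_i}$. The relevant object on the lattice side is the full lattice of relations $\mathcal{M} = \{(u_1,\ldots,u_n) \in \mathbb{Z}^n \mid \sum_{i=1}^n u_i\, a_i = 0\}$, and the cited result identifies $\ker(\varphi)$ with $I_\mathcal{M}$. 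So the entire task reduces to proving that the lattice $\mathcal{L}$ spanned by the rows of $A$ coincides with $\mathcal{M}$.

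For the inclusion $\mathcal{L} \subseteq \mathcal{M}$, I would read each row of $A$ as an explicit relation among the $a_i$: the first $n-2$ rows yield $b\, a_i - (b+1)\, a_{i+1} + a_{i+2} = 0$, i.e.\ Lemma~\ref{lema10} with $j=2$; the last row yields $(a+1)\, a_1 + b\, a_{n-1} - (b+1)\, a_n = 0$, which combines Lemma~\ref{lema10} for $i=n-1,\ j=2$ with the identity $a_{n+1} = (a+1)\, a_1$ provided by Lemma~\ref{lema5}. Both lattices have rank $n-1$: $\mathcal{M}$ because $\gcd(a_1,\ldots,a_n) = 1$ by Proposition~\ref{prop4}, and $\mathcal{L}$ because, as indicated just before the corollary, the maximal minors of $A$ are $-a_1, a_2, -a_3, \ldots, (-1)^n a_n$, not all zero.

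Equality then follows from a saturation argument: $\mathcal{M}$ is saturated in $\mathbb{Z}^n$ by definition, so once I know that $\mathbb{Z}^n/\mathcal{L}$ is also torsion-free the two rank-$(n-1)$ lattices must coincide. By the Smith normal form of $A$, the order of the torsion subgroup of $\mathbb{Z}^n/\mathcal{L}$ equals the greatest common divisor of the maximal minors of $A$, which by the previous observation is $\gcd(a_1,\ldots,a_n) = 1$. Therefore $\mathcal{L} = \mathcal{M}$, and $I_\mathcal{L} = I_\mathcal{M} = \ker(\varphi)$, as claimed. The delicate point is precisely this last step: one must argue that the somewhat ad hoc matrix $A$ already generates the \emph{full} lattice of relations rather than a proper finite-index sublattice, and it is the gcd-of-minors computation, rather than any commutative-algebra input, that does the real work.
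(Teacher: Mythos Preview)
Your proof is correct and follows exactly the route the paper takes: the paragraph preceding the corollary already records that the rows of $A$ are relations among the $a_i$ (via Lemmas~\ref{lema5} and~\ref{lema10}), that the maximal minors of $A$ are $\pm a_1,\ldots,\pm a_n$, and that $\mathbb{Z}^n/\mathcal{L}$ is therefore torsion-free when $\gcd(a_1,\ldots,a_n)=1$, after which the corollary is stated as a direct consequence of \cite[Section~7.1]{MS05}. You have simply made explicit the identification $\mathcal{L}=\mathcal{M}$ that the paper leaves implicit in its claim that $S_a(b,n)\to\mathbb{N}^n/\mathcal{L}$ is an isomorphism.
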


Observe that accordingly to the definition of $\mathcal{L}$ the binomials in $I_\mathcal{L}$ not involving $x_1$ are homogeneous.

\section{Ap\'ery sets of grepunit semigroups}\label{Sect3}

The main aim of this section is to determine the Apéry set of a grepunit semigroup with respect to its multiplicity. Let us start by recalling what Apéry sets of a numerical semigroup are.

\begin{definition}\label{def12}
Let $S$ be a numerical semigroup. The Ap\'ery set of $S$ with respect to $s \in S$, denoted $\operatorname{Ap}(S,s)$, is defined as
\[\operatorname{Ap}(S,s) = \{\omega \in S \mid \omega-s \not\in S\}.\]
For the sake of simplicity we write $\operatorname{Ap}(S)$ for the Ap\'ery set of $S$ with respect to its multiplicity, that is, $\operatorname{Ap}(S) = \operatorname{Ap}(S,\operatorname{m}(S))$.
\end{definition}

Let $a, b$ and $n$ three positive integers such that $b > 1, n > 1$ and $\gcd(r_b(n), a) = 1$. As mentioned above, the main objective of this section is to compute $\operatorname{Ap}(S_a(b,n))$, to this end, we first introduce the sets $R(b,i)$.

\begin{definition}\label{def10}
Let $i \geq 2$ be an integer and define $R(b,i)$ to be the subset of $\mathbb{N}^{i-1}$ whose elements $(u_2, \ldots, u_i)$ satisfy 
\begin{enumerate}[(a)]
\item $0 \leq u_j \leq b,$ for every $j = 2, \ldots, n$;
\item if $u_j = b$, then $u_k = 0$ for every $k < j$.
\end{enumerate}
\end{definition}

Observe that 
\begin{equation}\label{ecu2}
R(b,i) = \left( R(b,i-1) \times \{0, \ldots, b-1\} \right) \cup \{(0, \ldots, 0, b)\} \subset \mathbb{N}^{i-1},
\end{equation}
for every $i \geq 3$.

\begin{lemma}\label{lema14}
The cardinality of $R(b,i)$ is equal to $r_b(i)$, for every $i \geq 2$.
\end{lemma}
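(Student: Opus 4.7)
The plan is to induct on $i$, using the recursive decomposition (\ref{ecu2}) that the authors have just recorded.

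For the base case $i = 2$, the set $R(b,2) \subset \mathbb{N}$ consists of all tuples $(u_2)$ with $0 \leq u_2 \leq b$ (condition (b) is vacuous), so $|R(b,2)| = b+1 = r_b(2)$.

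For the inductive step, assume $|R(b,i-1)| = r_b(i-1)$ for some $i \geq 3$. I would first observe that the union in (\ref{ecu2}) is disjoint: the tuples in $R(b,i-1) \times \{0, \ldots, b-1\}$ have last coordinate at most $b-1$, whereas $(0, \ldots, 0, b)$ has last coordinate $b$. Hence
\[
|R(b,i)| = b \cdot |R(b,i-1)| + 1 = b \cdot r_b(i-1) + 1 = \sum_{j=1}^{i-1} b^j + 1 = r_b(i),
\]
which closes the induction.

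The only point requiring care is justifying (\ref{ecu2}), but this was essentially settled in the preceding remark: a tuple $(u_2, \ldots, u_i) \in R(b,i)$ with $u_i \leq b-1$ places no constraint from (b) on $(u_2, \ldots, u_{i-1})$ beyond membership in $R(b,i-1)$, while $u_i = b$ forces $u_2 = \cdots = u_{i-1} = 0$ by (b). No real obstacle is expected; the argument is a one-line induction once the disjointness of the union in (\ref{ecu2}) is noted.
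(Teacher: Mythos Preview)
Your proof is correct and follows essentially the same approach as the paper: induction on $i$ with base case $|R(b,2)| = b+1 = r_b(2)$, and the inductive step via the decomposition \eqref{ecu2} together with $r_b(i) = b\, r_b(i-1) + 1$. The only cosmetic difference is that you make the disjointness of the union in \eqref{ecu2} explicit, whereas the paper uses it silently.
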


\begin{proof}
We proceed by induction on $i$. If $i=2$, then $R(b,i) = \{0,1,\ldots,b\}$ and $r_b(i) = b+1$. Suppose that $i > 2$ and that the result is true for $i-1$. By \eqref{ecu2}, the cardinality of $R(b,i)$ is equal to $b$ times the cardinality of $R(b,i-1)$ plus one. Since, by induction hypothesis, the cardinality of $R(b,i-1)$ is equal to $r_b(i-1)$ and $r_b(i) = b\, r_b(i-1) + 1$, we are done.
\end{proof}

Recall that, by Proposition \ref{prop4}, $S_a(b,n)$ is a grepunit semigroup if and only if $\gcd(r_b(n), a) = 1$. In this case, $S_a(b,n)$ is minimally generated by
\[a_1:=r_b(n), a_2 := r_b(n) + a\, r_b(1), \ldots, a_n :=  r_b(n) + a\, r_b(n-1), \] by Theorem \ref{teo6}. 
 
\begin{theorem}\label{teo15}
With the above notation, we have that \[\operatorname{Ap}(S_a(b,n)) = 
\left\{\sum_{i=2}^n u_i a_i \mid (u_2, \ldots, u_n) \in R(b,n)\right\}.\]
\end{theorem}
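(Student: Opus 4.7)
The plan is to prove the equality by a cardinality argument. Writing $W$ for the set on the right-hand side, I will show $|W| = a_1$ and $W \subseteq \operatorname{Ap}(S_a(b,n))$; since the Ap\'ery set (with respect to the multiplicity $a_1$) has exactly $a_1$ elements, the equality follows.

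For $|W| = a_1$: since $a_i \equiv a\,r_b(i-1) \pmod{a_1}$ and $\gcd(a,a_1) = 1$, the elements of $W$ are pairwise incongruent modulo $a_1$ if and only if the map $\Psi\colon R(b,n) \to \mathbb{Z}$, $(u_2,\ldots,u_n)\mapsto \sum u_i r_b(i-1)$, is injective. I would prove the stronger statement that $\Psi$ is a bijection onto $\{0,1,\ldots,r_b(n)-1\}$ by induction on $n$ using the decomposition \eqref{ecu2}: inductively $\Psi$ is a bijection from $R(b,n-1)$ onto $\{0,\ldots,r_b(n-1)-1\}$, and then tuples $(u_2,\ldots,u_{n-1},u_n)$ with $(u_2,\ldots,u_{n-1}) \in R(b,n-1)$ and $u_n \in \{0,\ldots,b-1\}$ sweep out $\{0,\ldots,b\,r_b(n-1)-1\}$ (with $u_n$ acting as a ``digit'' of weight $r_b(n-1)$), while the exceptional element $(0,\ldots,0,b)$ contributes the missing top value $b\,r_b(n-1) = r_b(n)-1$.

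For $W \subseteq \operatorname{Ap}(S_a(b,n))$, fix $w = \sum u_i a_i \in W$ and assume, for contradiction, that $w - a_1 \in S_a(b,n)$; then $\sum_i u_i a_i = (v_1+1)\,a_1 + \sum_{j\ge 2} v_j a_j$ for some $v_i \in \mathbb{N}$. Substituting $a_j = a_1 + a\,r_b(j-1)$ and comparing the $a_1$-coefficient and the coefficient of $a$ (using $\gcd(a,a_1) = 1$), one extracts an integer $t \ge 0$ satisfying
\[
\textstyle\sum_j v_j\, r_b(j-1) \;=\; \sum_i u_i\, r_b(i-1) + t\,a_1, \qquad \sum_i u_i \;=\; \sum_j v_j + v_1 + 1 + at.
\]
The contradiction comes from the inequality $\sum_j v_j + at \ge \sum_i u_i$, which is incompatible with the second identity because $v_1+1 \ge 1$.

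The main obstacle is establishing this last inequality. The case $t = 0$ is a minimality claim: the $R(b,n)$-tuple realises the minimum of $\sum v_j$ among all nonnegative integer tuples $(v_2,\ldots,v_n)$ with $\sum v_j\, r_b(j-1) = P$ for fixed $P \in \{0,\ldots,a_1-1\}$. I would prove this by an exchange argument using the substitutions $(b+1)\,r_b(j-1) = b\,r_b(j-2) + r_b(j)$ and $b\,r_b(j-1) + r_b(1) = r_b(j)$ (both consequences of $r_b(j) = 1 + b\,r_b(j-1)$), with termination ensured by a measure such as $\Phi(v) = \sum_{j \ge 2} (j-1)\,v_j$. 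The case $t \ge 1$ follows by combining the lower bound $\sum_j v_j \ge \lceil (P + t\,a_1)/r_b(n-1) \rceil$ with a careful accounting of the sizes attainable by $R(b,n)$-tuples.
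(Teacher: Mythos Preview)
Your overall strategy (prove $|W|=a_1$ and $W\subseteq\operatorname{Ap}(S_a(b,n))$) is sound, and your argument that $\Psi$ bijects $R(b,n)$ onto $\{0,\ldots,a_1-1\}$ is correct. The difficulty is entirely in Part~2, and there your sketch does not close. For $t=0$ you propose to reduce an arbitrary representation to the $R(b,n)$ one via the two substitutions $(b+1)\,r_b(j-1)=b\,r_b(j-2)+r_b(j)$ and $b\,r_b(j-1)+r_b(1)=r_b(j)$, but these moves alone can get stuck at a tuple outside $R(b,n)$: take $b=3$, $n=5$ and $(v_2,v_3,v_4,v_5)=(0,1,3,0)$, which represents $P=43<121=a_1$; every $v_j\le b$ and $v_2=0$, so neither move applies, yet $v_4=b$ with $v_3\ne 0$ violates condition~(b). (The stuck tuple happens to have the same sum as the $R(b,n)$ tuple $(3,0,0,1)$, but your argument does not establish that.) You would need the full family of relations $r_b(j)+b\,r_b(k-1)=r_b(k)+b\,r_b(j-1)$ coming from Lemma~\ref{lema10}, or a different proof of the minimality claim. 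For $t\ge 1$ the phrase ``careful accounting'' is not an argument: the bound $\sum v_j\ge (P+t\,a_1)/r_b(n-1)$ combined with $\sum u_i\le (n-1)(b-1)+1$ gives $\sum v_j+at>t(a+b)$, and $t(a+b)\ge (n-1)(b-1)+1$ can fail (e.g.\ $a=1$, $b=2$, $n$ large), so the naive estimate does not suffice and a finer analysis tying $\sum u_i$ to $P$ is required.

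By contrast, the paper avoids both obstacles by proving the \emph{reverse} inclusion $\operatorname{Ap}(S_a(b,n))\subseteq W$ directly: one inducts on the elements of the Ap\'ery set, writes $\omega_j=a_k+\omega'$ with $k$ minimal and $\omega'\in\operatorname{Ap}(S_a(b,n))$ already in $R(b,n)$ form, and then uses Lemma~\ref{lema10} (the identity $b\,a_i+a_{i+\ell}=b\,a_{i+\ell-1}+a_{i+1}$) to rule out the two ways the augmented tuple could fall outside $R(b,n)$. Combined with Lemma~\ref{lema14} ($|R(b,n)|=a_1$) this gives equality immediately, without any minimality or coin-counting argument.
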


\begin{proof}
For the sake of simplicity of notation, we write $S$ for $S_a(b,n)$.

As $\operatorname{Ap}(S) \subset \mathbb{N}$, its elements are naturally ordered: $0 = \omega_1 < \ldots < \omega_{a_1}.$ So, we can proceed by induction on the index $j$ of $\omega_j$. If $j = 1$, then $\omega_j = 0$; so, by taking $(0, \ldots, 0) \in R(b,n)$ we are done. Suppose now that $j > 1$ and that the result is true for every $j' < j$. Let $k$ be the smallest index such that $\omega_j-a_k \in S$. Clearly, $(\omega_j - a_k) - a_1 \not\in S$; otherwise $\omega_j - a_1 = \big((\omega_j - a_k) - a_1\big) + a_k \in S$, in contradiction with the fact that $\omega_j \in \operatorname{Ap}(S)$. Therefore $\omega_j - a_k \in \operatorname{Ap}(S)$ and, by induction hypothesis, there exists $(u_2, \ldots, u_n) \in R(b,n)$ such that $\omega_j - a_k = \sum_{i=2}^n u_i a_i$. Thus, \[\omega_j = \sum_{i=2}^{k-1} u_i a_i + 
(u_k+1) a_k + \sum_{i=k+1}^{n} u_i a_i = (u_k+1) a_k + \sum_{i=l+1}^{n} u_i a_i,\] where the second equality follows from the minimality of $k$. Let us see that $(0, \ldots, 0, u_k+1, u_{k+1}, \ldots, u_n)$ lies in $R(b,n)$. If $u_k+1 \leq b$ and $u_i < b,\ i \in \{k+1, \ldots, n\}$, we are done. So, we distinguish two cases:
\begin{enumerate}
\item If $u_k+1 > b$, then $u_k+1=b+1$. In this case, $(u_k + 1) a_k = (b+1)\, a_k = b\, a_k + a_k = b\, a_{k-1}+a_{k+1}$, where the last equality follows from Lemma \ref{lema10}.
\item If $u_i = b$, for some $i \in \{k+1, \ldots, n\}$, then $u_k = 0$ and so $(u_k +1) a_k + u_i a_i = a_k + b a_i = b\, a_{k-1} + a_{i+1}$, where the last equality follows from Lemma \ref{lema10} again.
\end{enumerate}
In both cases, we obtain that $\omega_j - a_{k-1} \in S$ which contradicts the minimality of $k$. Hence none of these two cases can occur. 
\end{proof}

In \cite{JZ18} the notion of homogeneous numerical semigroups is introduced. These numerical semigroups are characterized by fact that each element in the Apery set with respect to multiplicity have uniqueness of length. Recall that if $S$ is a numerical semigroup minimally generated by $\{a_1, \ldots, a_n\}$, then the set of lengths of $s \in S$ is defined as
\[\mathsf{L}_S(s) := \left\{\sum_{j=1}^n u_j\ \mid\ s = \sum_{j=1}^n u_j\, a_j,\ u_j \geq 0 \right\}.\]

\begin{definition}\label{def16}
A numerical semigroup is said to be homogeneous if $\mathsf{L}_S(s)$ is a singleton for each $s \in \operatorname{Ap}(S)$.
\end{definition}

\begin{proposition}\label{prop17}
The numerical semigroup $S_a(b,n)$ is homogeneous.
\end{proposition}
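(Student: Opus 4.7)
The plan is to prove that for every $s \in \operatorname{Ap}(S_a(b,n))$, all factorizations of $s$ have the same total length, by transforming any factorization of $s$ into the canonical representative in $R(b,n)$ supplied by Theorem \ref{teo15} using only length-preserving moves from Lemma \ref{lema10}. I note up front that Theorem \ref{teo15} together with Lemma \ref{lema14} yield a bijection $R(b,n) \leftrightarrow \operatorname{Ap}(S_a(b,n))$: the domain has cardinality $r_b(n) = \operatorname{m}(S_a(b,n))$, and the map is surjective by Theorem \ref{teo15}, so the canonical tuple $(u_2, \ldots, u_n) \in R(b,n)$ representing $s$ is unique.

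First I would establish that every factorization $s = \sum_{i=1}^n v_i\, a_i$ of $s \in \operatorname{Ap}(S_a(b,n))$ has $v_1 = 0$: otherwise $s - a_1 = (v_1 - 1)\,a_1 + \sum_{i \geq 2} v_i\, a_i \in S_a(b,n)$, contradicting $s \in \operatorname{Ap}(S_a(b,n))$. I then write $W_0 = \sum_{i=2}^n u_i$ for the length of the canonical factorization; this is the value I aim to match.

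The heart of the proof is to transform an arbitrary factorization $(v_2, \ldots, v_n)$ with $v_1 = 0$ into the canonical $(u_2, \ldots, u_n) \in R(b,n)$ using moves of the form $b\, a_i + a_{i+j} \longleftrightarrow b\, a_{i+j-1} + a_{i+1}$ drawn from Lemma \ref{lema10} with $i \geq 2$ and $i + j \leq n$; these moves do not involve $a_1$ and both sides consist of exactly $b + 1$ summands, so the total length is preserved. Mirroring the case analysis at the end of the proof of Theorem \ref{teo15}, the potentially ``forbidden'' rewrites --- namely $(b+1)\,a_2 = b\,a_1 + a_3$, $(b+1)\,a_n = b\,a_{n-1} + (a+1)\,a_1$ (invoking $a_{n+1} = (a+1)\,a_1$ from Lemma \ref{lema5}), and $a_2 + b\, a_j = b\,a_1 + a_{j+1}$ for $j \in \{2, \ldots, n-1\}$ --- would each produce a factorization of $s$ with $v_1 \geq 1$, contradicting the initial step. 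Hence the triggering configurations ($v_2 \geq b+1$; $v_n \geq b+1$; and $v_2 \geq 1$ together with some $v_j = b$ for $j > 2$) cannot occur, and the reduction proceeds entirely among factorizations with $v_1 = 0$, terminating at the unique element of $R(b,n)$ representing $s$.

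Because each allowed move preserves the length, every factorization of $s$ has length $W_0$, so $\mathsf{L}_S(s) = \{W_0\}$ and $S_a(b,n)$ is homogeneous. The main obstacle is the rigorous justification of termination of the reduction procedure: I would address it by equipping $\mathbb{N}^{n-1}$ with a well-founded order (for instance, a reverse-lexicographic order tailored to the defining inequalities of $R(b,n)$) under which each allowed move strictly decreases, guaranteeing that the process halts at an $R(b,n)$-element after finitely many steps.
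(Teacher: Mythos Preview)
Your approach is quite different from the paper's. The paper dispatches Proposition~\ref{prop17} in one line by combining Corollary~\ref{cor11} with the observation made just before it---that every binomial in $I_\mathcal{L}$ not involving $x_1$ is homogeneous---and then invoking \cite[Proposition~3.9]{JZ18}, which characterizes homogeneous numerical semigroups precisely by that property of the toric ideal. You instead stay at the level of factorizations, rewriting any factorization of $s\in\operatorname{Ap}(S_a(b,n))$ into the unique $R(b,n)$-representative via length-preserving moves from Lemma~\ref{lema10} that avoid $a_1$. This is more elementary and self-contained, but it is also more work.

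There is, however, a concrete gap in your case analysis. Your list of forbidden configurations omits the case $v_n=b$ together with $v_l\ge 1$ for some $l\in\{3,\ldots,n-1\}$: the tuple with $v_l=1$, $v_n=b$ and all other entries zero violates the defining conditions of $R(b,n)$ yet admits no length-preserving move among $a_2,\ldots,a_n$, so your reduction stalls there. The fix is immediate: by Lemma~\ref{lema10} together with Lemma~\ref{lema5} one has $a_l + b\,a_n = b\,a_{l-1} + a_{n+1} = b\,a_{l-1} + (a+1)\,a_1$, so this configuration also forces $s-a_1\in S$ and is therefore impossible for $s\in\operatorname{Ap}(S_a(b,n))$. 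Once this case is added, termination of the reduction can be secured with the linear potential $\Phi(v)=\sum_{i=2}^n i\,v_i$: each allowed move $a_k+b\,a_j\to b\,a_{k-1}+a_{j+1}$ with $3\le k\le j\le n-1$ decreases $\Phi$ by $(b-1)(j-k+1)>0$, which makes your sketch rigorous.
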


\begin{proof}
By \cite[Proposition 3.9]{JZ18} and Corollary \ref{cor11}, it suffices to observe that one of the terms of each non-homogeneous element in \eqref{ecu1} is divisible by $x_1$.
\end{proof}

Let us see now that, for $i \in \{3, \ldots, n\}$, the Ap\'ery set $\operatorname{Ap}(S_a(b,i))$ can be constructed from the set $R(b,i-1)$. But, first we need a further piece of notation. 

Given $i \in \{2, \ldots, n\}$, we write $S_i$ for $S_a(b,i)$. Recall that, by Proposition \ref{prop4}, $S_i$ is a repunit semigroup if and only if $\gcd(r_b(i), a) = 1$. In this case, $S_i$ is minimally generated by \[a^{(i)}_1:=r_b(i), a^{(i)}_2 := a^{(i)}_1 + a\, r_b(1), \ldots, a^{(i)}_i :=  a^{(i)}_1 + a\, r_b(i-1), \] by Theorem \ref{teo6}. 

\begin{corollary}\label{cor18}
Let $i \in \{3, \ldots, n\}$. If $\gcd(r_b(i),a) = 1$, then $\omega \in \operatorname{Ap}(S_a(b,i))$ if and only if $\omega = b\, a^{(i)}_i$ or there exist $(u_2, \ldots u_{i-1}) \in R(b,i-1)$ and $u_i \in \{0, \ldots, b-1\}$ such that 
\begin{equation}\label{ecu3} 
\omega = \sum_{j=2}^{i-1} u_j a^{(i-1)}_j + b^{i-1} \left(\sum_{j=2}^{i-1} u_j \right) + u_i\, a^{(i)}_i.
\end{equation}
\end{corollary}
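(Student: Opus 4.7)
The plan is to read the corollary as a direct reformulation of Theorem~\ref{teo15} via the recursive decomposition \eqref{ecu2} of $R(b,i)$. First I would apply Theorem~\ref{teo15} to write every $\omega \in \operatorname{Ap}(S_a(b,i))$ as $\sum_{j=2}^{i} u_j a^{(i)}_j$ with $(u_2, \ldots, u_i) \in R(b,i)$. The identity \eqref{ecu2} then splits this description into two disjoint cases: either $(u_2, \ldots, u_i) = (0, \ldots, 0, b)$, which yields $\omega = b\, a^{(i)}_i$, or $(u_2, \ldots, u_{i-1}) \in R(b,i-1)$ and $u_i \in \{0, \ldots, b-1\}$, which yields $\omega = \sum_{j=2}^{i-1} u_j a^{(i)}_j + u_i a^{(i)}_i$.

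The only real calculation is to rewrite the latter expression in terms of the generators $a^{(i-1)}_j$ of the smaller semigroup. From $a^{(i)}_j = r_b(i) + a\, r_b(j-1)$ together with the evident identity $r_b(i) = r_b(i-1) + b^{i-1}$, one immediately gets $a^{(i)}_j = a^{(i-1)}_j + b^{i-1}$ for every $j \in \{2, \ldots, i-1\}$. Substituting this into $\sum_{j=2}^{i-1} u_j a^{(i)}_j$ converts it into $\sum_{j=2}^{i-1} u_j a^{(i-1)}_j + b^{i-1} \sum_{j=2}^{i-1} u_j$, which is precisely the shape required in \eqref{ecu3}.

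Since \eqref{ecu2} is an exact disjoint union, both implications of the corollary are established in parallel: each $\omega$ in the Ap\'ery set arises from a unique tuple of $R(b,i)$ and hence from exactly one of the two forms, and conversely every such expression, corresponding to a genuine tuple in $R(b,i)$, lies in $\operatorname{Ap}(S_a(b,i))$ by Theorem~\ref{teo15}. No genuine obstacle appears; the key step is the elementary bridge identity $a^{(i)}_j - a^{(i-1)}_j = b^{i-1}$, which is exactly what makes the recursion between the Ap\'ery sets of $S_a(b,i)$ and $S_a(b,i-1)$ explicit.
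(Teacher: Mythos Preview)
Your proof is correct and follows essentially the same route as the paper: both arguments combine Theorem~\ref{teo15} with the decomposition~\eqref{ecu2} of $R(b,i)$ and the bridge identity $a^{(i)}_j = a^{(i-1)}_j + b^{i-1}$, the only difference being that the paper rewrites \eqref{ecu3} back into the form of Theorem~\ref{teo15} while you proceed in the opposite direction. One tiny caveat: your remark that each $\omega$ arises from a \emph{unique} tuple in $R(b,i)$ is not part of Theorem~\ref{teo15} (it follows a posteriori from Lemma~\ref{lema14} and a cardinality count), but you do not actually need uniqueness for the set equality you are proving.
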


\begin{proof}
We observe that $a^{(i)}_j = a^{(i-1)}_j + r_b(i) - r_b(i-1) = a^{(i-1)}_j + b^{i-1},\ j = 0, \ldots, i-1$. So, \eqref{ecu3} becomes 
\[\omega = \sum_{j=1}^{i-1} u_j a^{(i)}_j + u_i\, a^{(i)}_i\] and, taking into account  \eqref{ecu2}, our claim readly follows from Theorem \ref{teo15}.
\end{proof}

Notice that, by Theorem \ref{teo15}, an immediate consequence of Corollary \ref{cor18} is that $\operatorname{Ap}(S_a(b,i))$ can constructed from $\operatorname{Ap}(S_a(b,i-1))$ provided that both $S_a(b,i)$ and $S_a(b,i-1)$ are numerical semigroups; indeed, the first and second summands in the right hand side of \eqref{ecu3} correspond to the elements of $\operatorname{Ap}(S_a(b,i-1))$ and their lengths, respectively. Recall that, by Proposition \ref{prop17}, each element in $\operatorname{Ap}(S_a(b,i-1))$ has uniqueness of length.

If $S$ is a homogeneous numerical semigroup, we write $\mathsf{m}_S(s)$ for the length of $s \in \operatorname{Ap}(S)$. The notation $\mathsf{m}_S(s)$ is usually reserved for the minimal length $s \in S$; clearly, no ambiguity occur in our case. 

The following result is an straightforward consequence of Corollary \ref{cor18}.

\begin{corollary}\label{cor19}
Let $i \in \{3, \ldots, n\}$. If $\gcd(r_b(i),a) = \gcd(r_b(i-1),a) = 1$, then $\omega \in \operatorname{Ap}(S_a(b,i))$ if and only if $\omega = b\, a^{(i)}_i$ or there exist $\omega' \in \operatorname{Ap}(S_a(b,i-1))$ and $u_i \in \{0, \ldots, b-1\}$ such that
\begin{equation}\label{ecu4} 
\omega = \omega' + b^{i-1}\, \mathsf{m}_{S_a(b,i-1)}(\omega') + u_i\, a^{(i)}_i.
\end{equation}
\end{corollary}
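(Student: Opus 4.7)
The plan is to derive this corollary as essentially a direct rewriting of Corollary \ref{cor18}, using the description of $\operatorname{Ap}(S_a(b,i-1))$ from Theorem \ref{teo15} together with the homogeneity of $S_a(b,i-1)$ from Proposition \ref{prop17}.

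First I would note that by Theorem \ref{teo15}, the hypothesis $\gcd(r_b(i-1),a)=1$ ensures that $S_a(b,i-1)$ is a numerical semigroup whose Apéry set is precisely
\[
\operatorname{Ap}(S_a(b,i-1)) = \left\{\sum_{j=2}^{i-1} u_j\, a^{(i-1)}_j \;\middle|\; (u_2,\ldots,u_{i-1}) \in R(b,i-1)\right\}.
\]
Moreover, Proposition \ref{prop17} tells us that $S_a(b,i-1)$ is homogeneous, so each $\omega' \in \operatorname{Ap}(S_a(b,i-1))$ has a unique length. Combining this with the representation above, the tuple $(u_2,\ldots,u_{i-1}) \in R(b,i-1)$ giving $\omega'$ is in fact unique, and
\[
\mathsf{m}_{S_a(b,i-1)}(\omega') = \sum_{j=2}^{i-1} u_j.
\]
(One can also verify uniqueness independently by counting: $|\operatorname{Ap}(S_a(b,i-1))| = a^{(i-1)}_1 = r_b(i-1) = |R(b,i-1)|$ by Lemma \ref{lema14}, so surjectivity forces bijectivity.)

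With these two identifications in hand, the corollary follows by simply substituting into the formula provided by Corollary \ref{cor18}. Namely, given $\omega \in \operatorname{Ap}(S_a(b,i))$ with $\omega \neq b\, a^{(i)}_i$, Corollary \ref{cor18} gives $(u_2,\ldots,u_{i-1}) \in R(b,i-1)$ and $u_i \in \{0,\ldots,b-1\}$ with
\[
\omega = \sum_{j=2}^{i-1} u_j\, a^{(i-1)}_j + b^{i-1}\!\left(\sum_{j=2}^{i-1} u_j\right) + u_i\, a^{(i)}_i,
\]
and setting $\omega' := \sum_{j=2}^{i-1} u_j\, a^{(i-1)}_j \in \operatorname{Ap}(S_a(b,i-1))$ with $\mathsf{m}_{S_a(b,i-1)}(\omega') = \sum_{j=2}^{i-1} u_j$ produces exactly \eqref{ecu4}. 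Conversely, any $\omega$ of the form \eqref{ecu4} unpacks uniquely, via the bijection above, into the form appearing in Corollary \ref{cor18}, hence lies in $\operatorname{Ap}(S_a(b,i))$.

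I do not expect a serious obstacle here: the only subtlety is confirming that ``$\mathsf{m}_{S_a(b,i-1)}(\omega')$'' is well-defined, which is exactly what Proposition \ref{prop17} guarantees, and that it agrees with $\sum_{j=2}^{i-1} u_j$, which is immediate from Theorem \ref{teo15} once uniqueness of length is granted. The rest is a mechanical substitution in the statement of Corollary \ref{cor18}.
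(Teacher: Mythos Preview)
Your proposal is correct and follows exactly the approach the paper intends: the paper itself gives no formal proof, stating only that the result is ``an straightforward consequence of Corollary \ref{cor18}'' after observing (in the paragraph preceding the statement) that the first two summands in \eqref{ecu3} correspond, via Theorem \ref{teo15} and Proposition \ref{prop17}, to the elements of $\operatorname{Ap}(S_a(b,i-1))$ and their (unique) lengths. Your write-up simply fills in those details.
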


Corollary \ref{cor19} maintains a great similarity with \cite [Theorem 3.3]{OP18}; however, the techniques used are very different and, more importantly, we do not need the strong hypothesis of \cite[Theorem 3.3]{OP18} about the multiplicity.

\section{The Frobenius problem}\label{Sect4}

Let $a, b$ and $n$ three positive integers such that $b > 1, n > 1$ and $\gcd(r_b(n),a)=1$. 

In this section we address the Frobenius problem for $S_a(b,n)$. More precisely, we will give a formula for $\operatorname{F}(S_a(b,n))$ in terms of $a,b$ and $n$. To do this we take advantage of the following result due to Selmer (see, e.g., \cite[Proposition 2.21]{libro}).

\begin{proposition}\label{prop20}
Let $S$ be a numerical semigroup and let $s \in S\setminus \{0\}$. Then,
\begin{enumerate}[(a)]
\item $\operatorname{F}(S) = \max \operatorname{Ap}(S,s) - s$;
\item $\operatorname{g}(S) = \frac{1}s \left(\sum_{\omega \in \mathrm{Ap}(S,s)} \omega \right) - \frac{s-1}2$.
\end{enumerate}
\end{proposition}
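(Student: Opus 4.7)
The plan is to reduce everything to the basic structural fact that $\operatorname{Ap}(S,s)$ contains exactly one representative from each residue class modulo $s$. Concretely, I would first show that for every $r \in \{0, 1, \ldots, s-1\}$ there is a unique $\omega_r \in \operatorname{Ap}(S,s)$ with $\omega_r \equiv r \pmod{s}$, namely the minimum of $(r + s\mathbb{N}) \cap S$, which exists because $r + ks \in S$ for all sufficiently large $k$ (as $s \in S$ and $\mathbb{N}\setminus S$ is finite). The defining condition $\omega - s \not\in S$ in Definition \ref{def12} translates exactly into ``$\omega$ is the minimum of its class'', and this gives the decomposition $\operatorname{Ap}(S,s) = \{\omega_0, \ldots, \omega_{s-1}\}$ with $\omega_0 = 0$. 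This is the main load-bearing observation; once it is in place, both (a) and (b) become counting exercises.

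For part (a), the gaps $\mathbb{N}\setminus S$ split along residue classes modulo $s$. In the class of $\omega_r$, the elements of $S$ are exactly $\omega_r, \omega_r + s, \omega_r + 2s, \ldots$, so the gaps in that class are $\omega_r - s, \omega_r - 2s, \ldots$ as long as they are positive, with maximum $\omega_r - s$ (and no gaps at all when $r = 0$). Taking the maximum over all classes gives $\operatorname{F}(S) = \max_r (\omega_r - s) = \max \operatorname{Ap}(S,s) - s$.

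For part (b), I would count the gaps in each class directly. Writing $\omega_r = k_r\, s + r$ with $0 \le r < s$, the positive integers in the class $r$ strictly below $\omega_r$ are $r, r+s, \ldots, \omega_r - s$ (for $r \neq 0$, giving $k_r$ elements) and none for $r = 0$ (where $k_0 = 0$ anyway since $\omega_0 = 0$). Thus
\[
\operatorname{g}(S) = \sum_{r=0}^{s-1} k_r = \sum_{r=0}^{s-1} \frac{\omega_r - r}{s} = \frac{1}{s}\left(\sum_{\omega \in \operatorname{Ap}(S,s)} \omega\right) - \frac{1}{s}\cdot\frac{(s-1)s}{2},
\]
which simplifies to the stated formula. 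The only subtlety is being careful about the $r=0$ class, but this is handled automatically since $\omega_0 = 0$ contributes nothing on either side. The hardest step is really the initial bijection between $\operatorname{Ap}(S,s)$ and $\mathbb{Z}/s\mathbb{Z}$; the rest is a clean summation.
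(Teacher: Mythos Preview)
Your argument is correct and is essentially the standard proof of Selmer's formulas. Note, however, that the paper does not give its own proof of this proposition: it simply quotes the result from \cite[Proposition 2.21]{libro}, so there is no in-paper proof to compare against.
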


Before giving our formula for the Frobenius number, we show an interesting result which will be used below and later in the last section. As in the previous section we write $\operatorname{Ap}(S)$ for $\operatorname{Ap}(S,\operatorname{m}(S))$.

\begin{lemma}\label{lema21}
If $\alpha_i := a_i + \sum_{j=i}^n (b-1)\, a_j,\ i = 2, \ldots, n$, then 
the following holds:
\begin{enumerate}[(a)]
\item $\alpha_i \in \operatorname{Ap}(S_a(b,n))$, for every $i = 2, \ldots, n$
\item If $a < b^n-1,$ then $\alpha_2 > \ldots > \alpha_n$, and if $a > b^n-1$, then $\alpha_2 < \ldots < \alpha_n$.
\item For each $\omega \in \operatorname{Ap}(S_a(b,n)),$ there exists $i \in \{1, \ldots, n\}$ such that $\omega \leq \alpha_i$.
\end{enumerate}
\end{lemma}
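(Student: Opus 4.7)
The plan is to attack the three claims in sequence, reading all information from Theorem \ref{teo15} and the recurrence $a_{i+1} = a_i + a\,b^{i-1}$.

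For part (a), I would rewrite
\[
\alpha_i \;=\; a_i + (b-1)a_i + (b-1)\!\!\sum_{j=i+1}^{n} a_j \;=\; b\,a_i + (b-1)\!\!\sum_{j=i+1}^{n} a_j,
\]
which exhibits $\alpha_i$ as the $S_a(b,n)$-combination with coordinates $(u_2,\ldots,u_n) = (0,\ldots,0,b,b-1,\ldots,b-1)$, the $b$ sitting in position $i$. This tuple lies in $R(b,n)$: entries are bounded by $b$ (condition (a) of Definition \ref{def10}), and the unique occurrence of the value $b$ is at position $i$, preceded only by zeros (condition (b)). Theorem \ref{teo15} then gives $\alpha_i \in \operatorname{Ap}(S_a(b,n))$.

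For part (b), I would compute a single difference. From the formula in (a),
\[
\alpha_i - \alpha_{i+1} \;=\; b\,a_i - a_{i+1} \;=\; (b-1)\,a_i - a\,b^{i-1},
\]
using $a_{i+1} = a_i + a\,b^{i-1}$. Substituting $a_i = r_b(n) + a\,r_b(i-1)$ and the identities $(b-1)r_b(n) = b^n-1$ and $(b-1)r_b(i-1) = b^{i-1}-1$, this collapses to
\[
\alpha_i - \alpha_{i+1} \;=\; (b^n-1) + a(b^{i-1}-1) - a\,b^{i-1} \;=\; b^n - 1 - a,
\]
independent of $i$. Both monotonicity statements follow by comparing the sign of $b^n-1-a$.

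For part (c) I would first observe that in any $(u_2,\ldots,u_n)\in R(b,n)$ at most one coordinate can equal $b$: if both $u_j=u_{j'}=b$ with $j<j'$, condition (b) of Definition \ref{def10} applied at $j'$ forces $u_j=0$, contradiction. This splits the analysis into two cases when we write $\omega = \sum_{j=2}^n u_j a_j$ via Theorem \ref{teo15}. If every $u_j \leq b-1$, then
\[
\omega \;\leq\; (b-1)\sum_{j=2}^{n} a_j \;\leq\; b\,a_2 + (b-1)\sum_{j=3}^{n} a_j \;=\; \alpha_2.
\]
If $u_i = b$ for some (unique) $i$, then $u_k=0$ for $k<i$ and $u_k\leq b-1$ for $k>i$, so
\[
\omega \;=\; b\,a_i + \sum_{j=i+1}^{n} u_j a_j \;\leq\; b\,a_i + (b-1)\sum_{j=i+1}^{n} a_j \;=\; \alpha_i.
\]

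The only step that requires any real calculation is the difference computation in (b); the parts (a) and (c) are structural consequences of Theorem \ref{teo15} and the shape of $R(b,n)$. The main conceptual point, and the ingredient I would flag as the crucial one, is recognizing that the $\alpha_i$ are precisely the images under Theorem \ref{teo15} of the ``extremal'' tuples of $R(b,n)$ (those whose single $b$-entry is pushed as far left as allowed), so they dominate the Ap\'ery set coordinate-wise after the at-most-one-$b$ dichotomy is established.
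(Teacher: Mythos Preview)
Your proof is correct and follows essentially the same approach as the paper: part (a) via Theorem \ref{teo15} by exhibiting the tuple in $R(b,n)$, part (b) by the telescoping computation $\alpha_i-\alpha_{i+1}=b\,a_i-a_{i+1}=b^n-1-a$, and part (c) by reading off the coordinate bounds in $R(b,n)$. The only cosmetic difference is that in (c) the paper indexes by the leftmost nonzero entry (a single case), whereas you split on whether some coordinate equals $b$; both arguments are equivalent.
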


\begin{proof}
Part (a) is a nothing but a particular case of Theorem \ref{teo15}. To prove (b) it suffices to observe that \[\alpha_i - \alpha_{i+1} = b\, a_i - a_{i+1} = b^n-1-a,\ i = 2, \ldots, n-1,\] and note that $b^n-1 \neq a$ because $\gcd(r_b(n), a) = 1$ by hypothesis.
Finally, to prove (c) we can take advantage of Theorem \ref{teo15} which state that for each $\omega \in \operatorname{Ap}(S_a(b,n))$, there exist $(u_2, \ldots, u_n) \in R(b,n)$ such that $\omega =  \sum_{j=2}^n u_j\, a_j$. Clearly, by the definition of $R(b,n)$, if $u_i$ is the leftmost nonzero entry in $(u_2, \ldots, u_n) \in R(b,n)$, then \[\sum_{j=2}^n u_j\, a_j \leq a_i + \sum_{j=i}^n (b-1)\, a_j = \alpha_i\] and we are done. 
\end{proof}

\begin{theorem}\label{teo22}
The Frobenius number of $S_a(b,n)$ is equal to
\begin{enumerate}[(a)]
\item $(n-1)\,(b^n-1-a)+a\, a_1$, if $a < b^n-1;$ 
\item $b^n-1-a + a\, a_1$, if $a > b^n-1$.
\end{enumerate}
\end{theorem}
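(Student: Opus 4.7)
The plan is to invoke Selmer's formula (Proposition \ref{prop20}(a)) with $s = a_1 = \operatorname{m}(S_a(b,n))$, so the task reduces to identifying the largest element of $\operatorname{Ap}(S_a(b,n))$. Lemma \ref{lema21}(a) tells us that $\alpha_2, \ldots, \alpha_n$ all lie in $\operatorname{Ap}(S_a(b,n))$, while Lemma \ref{lema21}(c) says every nonzero element of $\operatorname{Ap}(S_a(b,n))$ is dominated by some $\alpha_i$. Hence
\[
\max \operatorname{Ap}(S_a(b,n)) = \max\{\alpha_2, \ldots, \alpha_n\},
\]
and Lemma \ref{lema21}(b) pins down this maximum: it equals $\alpha_2$ when $a < b^n-1$ and $\alpha_n$ when $a > b^n-1$. (Note that $a = b^n-1$ is impossible, since $r_b(n)$ divides $b^n-1$ and $\gcd(r_b(n),a) = 1$.) Thus the two cases (a) and (b) in the statement correspond exactly to computing $\alpha_2 - a_1$ and $\alpha_n - a_1$ respectively.

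For case (b) I would just unfold the definitions directly. Since $\alpha_n = a_n + (b-1)\,a_n = b\,a_n$ and $a_n = a_1 + a\,r_b(n-1)$, using the identity $b\,r_b(n-1) = r_b(n) - 1 = a_1 - 1$ I get
\[
\alpha_n = b\,a_1 + a(a_1 - 1) = a_1 + (b^n-1) + a\,a_1 - a,
\]
where the last step uses $(b-1)\,a_1 = b^n-1$. Subtracting $a_1$ yields the desired expression $b^n-1-a + a\,a_1$.

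For case (a) the cleanest route is to telescope. The proof of Lemma \ref{lema21}(b) already establishes $\alpha_i - \alpha_{i+1} = b^n-1-a$ for $i = 2, \ldots, n-1$, so
\[
\alpha_2 = \alpha_n + (n-2)(b^n-1-a).
\]
Combining with the value of $\alpha_n - a_1$ just computed gives $\alpha_2 - a_1 = (n-1)(b^n-1-a) + a\,a_1$, which is precisely formula (a).

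I do not anticipate a real obstacle here: all the combinatorial content is already packaged in Lemma \ref{lema21}, and what remains is purely the bookkeeping of the telescoping sum and the repunit identity $b\,r_b(n-1) = r_b(n)-1$. The only point requiring a small comment is that the maximum in $\{\alpha_2,\ldots,\alpha_n\}$ is strict (so we can safely split into $a<b^n-1$ and $a>b^n-1$), which is guaranteed by $\gcd(r_b(n),a)=1$ forcing $a \neq b^n-1$.
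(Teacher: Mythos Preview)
Your proposal is correct and follows essentially the same approach as the paper: both invoke Lemma~\ref{lema21} to identify $\max\operatorname{Ap}(S_a(b,n))$ as $\alpha_2$ or $\alpha_n$ according to the sign of $b^n-1-a$, and then apply Selmer's formula. The only difference is cosmetic: in case~(a) the paper expands $\alpha_2 - a_1 = \sum_{j=2}^n (b-1)a_j + a$ directly, whereas you telescope via $\alpha_2 = \alpha_n + (n-2)(b^n-1-a)$ and reuse the case~(b) computation, which is a bit tidier but not a different idea.
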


\begin{proof}
By Lemma \ref{lema21}, we have that $\max \operatorname{Ap}(S_a(b,n))$ is equal to either $ a_2 + \sum_{j=2}^n (b-1)\, a_j$, if $a < b^n-1$, or $b\, a_n$, if $a > b^n-1$. So, we distinguish two cases:
\begin{enumerate}[(a)]
\item If $a < b^n-1$, then $\max \operatorname{Ap}(S_a(b,n)) = \alpha_2$. So, by Selmer's formula (Proposition \ref{prop20}), we obtain that 
\begin{align*}
\operatorname{F}(S_a(b,n)) & =  \left(a_2 + \sum_{j=2}^n (b-1)\, a_j\right) - a_1 = \displaystyle{\sum_{j=2}^n (b-1)\, a_j} + a = \\  & = 
\sum_{j=2}^n (b-1) \left(a_1 + a \sum_{k=0}^{j-2} b^j\right) + a = \\ & =  (n-1)(b-1) a_1 + a \sum_{j=2}^n (b^{j-1}-1) +a = \\ & = 
(n-1)(b-1) a_1 + a\, a_1 - (n-1) a = \\ & =  (n-1)\,(b^n -1-a)+a\, a_1 .
\end{align*}
\item If $a > b^n-1$, then $\max \operatorname{Ap}(S_a(b,n)) = \alpha_n$. So, by Selmer's formula we conclude that \begin{align*}
\operatorname{F}(S_a(b,n)) & = b\, a_n - a_1 = b\, (a_1 + a\, r_b(n-1)) - a_1 = \\ & =  
(b-1) a_1 + a\, b,\, r_b(n-1) = b^n - 1 + a\, (a_1 -1)\\ & = b^n-1-a + a\,a_1.  
\end{align*}
\end{enumerate}
\end{proof}

Observe that condition $a > b^n-1$ corresponds to the case considered in \cite{Ugo17} (see the comment after Corollary \ref{cor8}).

\section{The genus of grepunit semigroups}\label{Sect5}

In this section, we use Selmer's formulas (Proposition \ref{prop20}) to compute the genus of repunit semigroups in terms of $a, b$ and $n$.

The following results state some useful properties of the sets $R(b,i)$. 

\begin{lemma}\label{lema23}
Let $b > 1$ and integer. For each $i \geq 2$, the following holds:
\[\sum_{(u_2, \ldots u_i) \in R(b,i)} \left( \sum_{j=2}^i u_j \right) = \sum_{j=2}^i \frac{b^i+b^{i-(j-1)}}{2}.\]
\end{lemma}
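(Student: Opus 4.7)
The plan is to swap the order of summation. Writing
\[\sum_{(u_2, \ldots, u_i) \in R(b,i)} \sum_{j=2}^i u_j = \sum_{j=2}^i N_j, \quad \text{where}\quad N_j := \sum_{(u_2, \ldots, u_i) \in R(b,i)} u_j,\]
it suffices to prove that $N_j = (b^i + b^{i-(j-1)})/2$ for each $j \in \{2, \ldots, i\}$; summing over $j$ then yields the stated identity.

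To compute $N_j$ for a fixed $j$, I would partition $R(b,i)$ into three classes according to the behavior of condition (b) of Definition \ref{def10} at index $j$. In class (A), some entry $u_{j'}$ with $j' > j$ equals $b$, which forces $u_j = 0$; these tuples contribute nothing to $N_j$. In class (B), $u_j = b$ itself: this forces $u_{j'} = 0$ for $j' < j$, and since $u_j \neq 0$, it also forces $u_{j'} < b$ for every $j' > j$. There are $b^{i-j}$ such tuples, each contributing $b$, for a total of $b^{i-j+1}$. In class (C), $u_{j'} < b$ for every $j' \geq j$; condition (b) then only constrains the prefix, and $(u_2, \ldots, u_{j-1})$ is free to range over $R(b, j-1)$ (empty when $j = 2$, with the convention that the single empty tuple corresponds to $r_b(1) = 1$).

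By Lemma \ref{lema14}, class (C) contributes $r_b(j-1) \cdot \tfrac{b(b-1)}{2} \cdot b^{i-j}$; invoking the elementary identity $(b-1)\, r_b(j-1) = b^{j-1} - 1$ this simplifies to $(b^i - b^{i-j+1})/2$. Adding the three classes gives $N_j = (b^i + b^{i-j+1})/2$, as desired. The main obstacle is the description of class (C): one must verify that the tuples in $R(b,i)$ with $u_{j'} < b$ for all $j' \geq j$ are in bijection with $R(b, j-1) \times \{0, \ldots, b-1\}^{i-j+1}$, which is a direct unpacking of conditions (a) and (b) of Definition \ref{def10} but is where the combinatorial content of the argument really lies. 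An equally viable alternative, more in the spirit of Lemma \ref{lema14}, is induction on $i$ using the recursive decomposition \eqref{ecu2}, which yields the recurrence $T(i) = b\,T(i-1) + r_b(i-1)\,\tfrac{b(b-1)}{2} + b$ for the left-hand side and reduces the verification to the same elementary identity above.
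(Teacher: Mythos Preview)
Your argument is correct. The three-class partition is sound: at most one coordinate of an element of $R(b,i)$ can equal $b$, so classes (A), (B), (C) are disjoint and exhaustive; the bijection for class (C) with $R(b,j-1)\times\{0,\ldots,b-1\}^{i-j+1}$ follows because once every $u_{j'}$ with $j'\geq j$ is below $b$, condition (b) of Definition~\ref{def10} constrains only the prefix, and those constraints are precisely the defining conditions of $R(b,j-1)$. The arithmetic then checks, including the boundary case $j=2$.

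Your route, however, is genuinely different from the paper's. The paper proves Lemma~\ref{lema23} by induction on $i$, using the recursive decomposition~\eqref{ecu2} together with Lemma~\ref{lema14}; this is exactly the alternative you sketch in your final sentence, with the same recurrence $T(i)=b\,T(i-1)+r_b(i-1)\tfrac{b(b-1)}{2}+b$. Your primary argument instead swaps the order of summation and computes each column sum $N_j$ directly. This buys you something the paper's induction does not: you establish the \emph{termwise} identity $N_j=(b^i+b^{i-(j-1)})/2$, which is strictly stronger than the stated lemma and would in fact streamline the proof of the next result, Proposition~\ref{prop24}, where the paper repeats an analogous induction. The paper's approach, on the other hand, avoids the case analysis on the location of the entry equal to $b$ and reuses machinery already in place.
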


\begin{proof}
We proceed by induction on $i$. If $i=2$, then $R(b,i) = \{0,1,\ldots,b\}$ and our claim readly follows.  Suppose that $i > 2$ and that the result is true for $i-1$. Now, since by Lemma \ref{lema14} the cardinality of $R(b,i-1)$ is equal to $r_b(i-1)$, by \eqref{ecu2}, we have that \[\sum_{(u_2, \ldots u_i) \in R(b,i)} \left( \sum_{j=2}^i u_j \right) = \sum_{(u_2, \ldots u_{i-1}) \in R(b,i-1)} b\, \left( \sum_{j=2}^{i-1} u_j \right) + r_b(i-1) \left( \sum_{k=0}^{b-1} k \right) + b.\] So, by induction hypothesis, we obtain that 
\begin{align*}
\sum_{(u_2, \ldots u_i) \in R(b,i)} \left( \sum_{j=2}^i u_j \right) & = 
b\, \sum_{j=2}^{i-1} \frac{b^{i-1}+b^{(i-1)-(j-1)}}{2} + r_b(i-1) \left(\sum_{k=0}^{b-1} k\right) + b = \\ & = \sum_{j=2}^{i-1} \frac{b^{i}+b^{i-(j-1)}}{2} + \frac{b^{i-1} - 1}{b-1}\, \frac{b (b-1)}2 + b = \\ 
& =  \sum_{j=2}^{i-1} \frac{b^{i}+b^{i-(j-1)}}{2} + \frac{b^i + b}2 = \sum_{j=2}^i \frac{b^i+b^{i-(j-1)}}{2},
\end{align*}
as claimed. 
\end{proof}

As in previous sections, let $a, b$ and $n$ three positive integers such that $b > 1$ and $n > 1$. Given $i \in \{2, \ldots, n\}$ we write \[a^{(i)}_1:=r_b(i), a^{(i)}_2 := a^{(i)}_1 + a\, r_b(1), \ldots, a^{(i)}_i :=  a^{(i)}_1 + a\, r_b(i-1).\]

\begin{proposition}\label{prop24}
Let $i \in \{2, \ldots, n\}.$ Then
\[\sum_{(u_2, \ldots u_i) \in R(b,i)} \left( \sum_{j=2}^i u_j a^{(i)}_j \right) = \sum_{j=2}^i \frac{b^i+b^{i-(j-1)}}{2}\, a^{(i)}_j.\]
\end{proposition}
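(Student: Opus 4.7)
The plan is to reduce to a per-coordinate identity via linearity, then induct on $i$ using the recursive decomposition \eqref{ecu2}. Specifically, swap the order of summation on the left-hand side to obtain
\[
\sum_{(u_2, \ldots, u_i) \in R(b,i)} \sum_{j=2}^i u_j\, a^{(i)}_j \;=\; \sum_{j=2}^i a^{(i)}_j\, \sigma^{(i)}_j, \qquad \text{where } \sigma^{(i)}_j := \!\!\!\sum_{(u_2, \ldots, u_i) \in R(b,i)}\!\! u_j.
\]
Then the proposition follows once we establish the refined identity
\[
\sigma^{(i)}_j \;=\; \frac{b^i + b^{i-(j-1)}}{2}, \qquad j = 2, \ldots, i,
\]
which is a coordinate-wise sharpening of Lemma \ref{lema23} (summing over $j$ recovers that lemma).

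I would prove this refined identity by induction on $i$. The base case $i=2$ is immediate: $R(b,2) = \{0, 1, \ldots, b\}$, and $\sigma^{(2)}_2 = \sum_{u=0}^b u = b(b+1)/2$, which matches $(b^2+b)/2$. For the inductive step, I would split $R(b,i)$ according to \eqref{ecu2} into the product part $R(b,i-1) \times \{0, \ldots, b-1\}$ and the exceptional tuple $(0, \ldots, 0, b)$, and handle the cases $j=i$ and $j<i$ separately. For $j<i$, the exceptional tuple contributes $0$ and, summing over the free coordinate $u_i \in \{0, \ldots, b-1\}$, each tuple of $R(b,i-1)$ is counted $b$ times, giving $\sigma^{(i)}_j = b\, \sigma^{(i-1)}_j$; the induction hypothesis then yields
\[
\sigma^{(i)}_j \;=\; b \cdot \frac{b^{i-1} + b^{(i-1)-(j-1)}}{2} \;=\; \frac{b^i + b^{i-(j-1)}}{2}.
\]
For $j = i$, the product part contributes $r_b(i-1) \cdot \sum_{k=0}^{b-1} k = r_b(i-1)\cdot b(b-1)/2$, the exceptional tuple contributes $b$, and using $r_b(i-1) = (b^{i-1}-1)/(b-1)$ a short computation gives the total $(b^i+b)/2$, matching the claim for $j = i$.

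There is essentially no serious obstacle: the key insight is to refine Lemma \ref{lema23} coordinate-wise, at which point the recursive structure \eqref{ecu2} does all the work and the induction is just bookkeeping. The only mild subtlety is that the statement mixes coefficients $a^{(i)}_j$ depending on $i$ with sums over $R(b,i)$; the linearity swap at the outset cleanly separates the two so that the induction runs on the purely combinatorial quantity $\sigma^{(i)}_j$ without needing to re-express $a^{(i)}_j$ in terms of $a^{(i-1)}_j$.
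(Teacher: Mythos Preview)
Your proof is correct, and it takes a genuinely different route from the paper's. The paper also inducts on $i$ via the decomposition \eqref{ecu2}, but it keeps the coefficients $a^{(i)}_j$ inside the induction: it uses the relation $a^{(i)}_j = a^{(i-1)}_j + b^{i-1}$ to rewrite the sum over $R(b,i)$ as (a multiple of) the sum over $R(b,i-1)$ with coefficients $a^{(i-1)}_j$ plus a correction term $b^{i-1}\sum_{R(b,i-1)}\sum_j u_j$, which it then handles by invoking Lemma~\ref{lema23}. Your approach instead strips off the $a^{(i)}_j$ via linearity at the outset and proves the sharper per-coordinate identity $\sigma^{(i)}_j = (b^i+b^{i-(j-1)})/2$ directly; this is cleaner in that it avoids the shift $a^{(i)}_j \to a^{(i-1)}_j$ entirely and renders Lemma~\ref{lema23} a corollary rather than an input. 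The paper's route has the minor virtue of showing explicitly how the Ap\'ery-set structure at level $i$ inherits from level $i-1$ through the generators, but for proving the stated identity your argument is more economical.
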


\begin{proof}
We proceed by induction on $i$. If $i = 2$, then $R(b,i) = \{0, \ldots, b\}$ and \[\sum_{u_2 \in \{0, \ldots, b\}} \left( u_2\, a^{(2)}_2 \right) = \left( \sum_{u_2 \in \{0, \ldots, b\}}  u_2 \right)\, a^{(2)}_2 =  \frac{b\, (b+1)}2\, a_2^{(2)}.\] Suppose now $i > 2$ and that the result is true for $i-1$. Since $a_j^{(i)} = a_j^{(i-1)} + b^{i-1},\ j = 1, \ldots, i-1$, by \eqref{ecu2}, we have that
\begin{align*}
\sum_{(u_2, \ldots u_i) \in R(b,i)} \left( \sum_{j=2}^i u_j a^{(i)}_j \right) = & \sum_{(u_2, \ldots, u_{i-1}) \in R(b,i-1)} b \left( \sum_{j=2}^{i-1} u_j a^{(i-1)}_j \right) + \\ & + b^{i-1} \left(\sum_{(u_2, \ldots, u_{i-1}) \in R(b,i-1)} \left( \sum_{j=2}^{i-1} u_j \right) \right) + \\ & + r_b(i-1) \sum_{k=0}^{b-1} k\, a^{(i)}_i + b\, a_i^{(i)}.
\end{align*}
By induction hypothesis, the first summand of the right hand side is equal to \[b \left(\sum_{j=2}^{i-1} \frac{b^{i-1} + b^{i-1-(j-1)}}2\, a_j^{(i-1)} \right) = \sum_{j=2}^{i-1} \frac{b^{i} + b^{i-(j-1)}}2\, a_j^{(i-1)},\] and, by Lemma \ref{lema23}, the second summand of the right hand side is equal to 
\[b^{i-1}\, \left(\sum_{j=2}^{i-1} \frac{b^i + b^{i-(j-1)}}2\right) = \sum_{j=2}^{i-1} \frac{b^i + b^{i-(j-1)}}2\, b^{i-1} = \sum_{j=2}^{i-1} \frac{b^i + b^{i-(j-1)}}2\, \left(a_j^{(i)}-a_j^{(i-1)}\right).\] Therefore, 
\begin{align*}
\sum_{(u_2, \ldots u_i) \in R(b,i)} \left( \sum_{j=2}^i u_j a^{(i)}_j \right) & = \sum_{j=2}^{i-1} \frac{b^i + b^{i-(j-1)}}2\, a_j^{(i)} +  + r_b(i-1) \sum_{k=0}^{b-1} k\, a^{(i)}_i + b\, a_i^{(i)} = \\ & = \sum_{j=2}^{i-1} \frac{b^i + b^{i-(j-1)}}2\, a_j^{(i)} +  \left(\sum_{k=0}^{i-2} b^k\right) \frac{(b-1)b}2\, a^{(i)}_i + b\, a^{(i)}_i = \\ & = \sum_{j=2}^{i-1} \frac{b^i + b^{i-(j-1)}}2\, a_j^{(i)} + \frac{b^i + b}2\, a^{(i)}_i =  \sum_{j=2}^i \frac{b^i+b^{i-(j-1)}}{2}\, a^{(i)}_j,
\end{align*}
as claimed.
\end{proof}

The next result follows immediately from Theorem \ref{teo15} and Proposition \ref{prop24}.

\begin{corollary}\label{cor25}
Let $i \in \{2, \ldots, n\}.$  Then \[\sum_{\omega \in \operatorname{Ap}(S_a(b,i))} \omega = \sum_{j=2}^i \frac{b^i+b^{i-(j-1)}}{2}\, a_j^{(i)}.\] 
\end{corollary}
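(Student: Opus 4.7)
The plan is to combine Theorem \ref{teo15}, which identifies the Ap\'ery set as the image of $R(b,i)$ under the map $(u_2,\dots,u_i)\mapsto \sum_{j=2}^i u_j a_j^{(i)}$, with Proposition \ref{prop24}, which evaluates exactly the sum over $R(b,i)$ of these expressions. The only substantive thing to verify is that the parametrization in Theorem \ref{teo15} is in fact a bijection, not merely a surjection, so that the sum over $\operatorname{Ap}(S_a(b,i))$ coincides with the sum indexed by $R(b,i)$ appearing in Proposition \ref{prop24}.

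First I would recall the well-known fact (see, e.g., \cite[Proposition 2.6]{libro}) that for a numerical semigroup $S$ with $s\in S\setminus\{0\}$, the set $\operatorname{Ap}(S,s)$ has exactly $s$ elements. Applied to $S=S_a(b,i)$ with $s=\operatorname{m}(S_a(b,i))=a_1^{(i)}=r_b(i)$, this gives $|\operatorname{Ap}(S_a(b,i))|=r_b(i)$. On the other hand, Lemma \ref{lema14} asserts $|R(b,i)|=r_b(i)$. Since Theorem \ref{teo15} exhibits a surjection from $R(b,i)$ onto $\operatorname{Ap}(S_a(b,i))$ between two finite sets of the same cardinality, it must be a bijection.

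Consequently,
\begin{equation*}
\sum_{\omega \in \operatorname{Ap}(S_a(b,i))} \omega \;=\; \sum_{(u_2,\ldots,u_i)\in R(b,i)} \left(\sum_{j=2}^i u_j\, a_j^{(i)}\right),
\end{equation*}
and an application of Proposition \ref{prop24} to the right-hand side yields the claimed formula.

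The main (mild) obstacle is the injectivity of the parametrization. If one prefers not to invoke the cardinality identity directly, an alternative is to notice that homogeneity of $S_a(b,i)$ (Proposition \ref{prop17}) forces each $\omega\in\operatorname{Ap}(S_a(b,i))$ to have a unique factorization in terms of $a_2^{(i)},\ldots,a_i^{(i)}$ (factorizations involving $a_1^{(i)}$ are excluded by the definition of the Ap\'ery set), so the map from $R(b,i)$ to $\operatorname{Ap}(S_a(b,i))$ is injective by uniqueness of the factorization. Either route closes the argument in one line once Proposition \ref{prop24} is in hand.
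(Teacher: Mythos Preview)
Your main argument is correct and is exactly what the paper has in mind: the paper simply states that the result ``follows immediately from Theorem \ref{teo15} and Proposition \ref{prop24}'', and your cardinality check (via $|\operatorname{Ap}(S_a(b,i))|=a_1^{(i)}=r_b(i)=|R(b,i)|$, using Lemma \ref{lema14}) is the natural way to make explicit that the parametrization is a bijection and hence that the two sums agree.

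One caution about your alternative route: homogeneity (Definition \ref{def16}, Proposition \ref{prop17}) only guarantees that every $\omega\in\operatorname{Ap}(S_a(b,i))$ has a \emph{unique length}, not a unique factorization. Two distinct tuples $(u_2,\ldots,u_i),(v_2,\ldots,v_i)\in R(b,i)$ with $\sum_j u_j=\sum_j v_j$ could in principle map to the same $\omega$ without contradicting homogeneity. So the homogeneity argument, as stated, does not by itself establish injectivity; stick with the cardinality comparison, which is clean and sufficient.
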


Now, as a direct consequence of Corollary \ref{cor25} and Selmer's formula (Proposition \ref{prop20}), we obtain the following formula for the genus of $S_a(b,n)$, provided that $\gcd((r_b(n), a) = 1$.
\begin{equation}\label{ecu5}
\begin{split}
\operatorname{g}(S_a(b,n)) & = \frac{\sum_{j=2}^n \displaystyle{\frac{b^n+b^{n-(j-1)}}{2}}\, a_j}{a_1} - \frac{a_1-1}{2} = \\ & = \frac{1}{a_1}\, \sum_{j=2}^n \frac{b^n+b^{n-(j-1)}}{2} \, a_j - \frac{a_1-1}{2} 
\end{split}
\end{equation}
where $a_j = a_j^{(n)},\ j = 1, \ldots, n$, as usual.

\begin{corollary}\label{cor26}
The genus of $S_a(b,n)$ is equal to \[\operatorname{g}(S_a(b,n)) =  \frac{(n-1)\, b^n + (a_1-1)\, a}2.\]
\end{corollary}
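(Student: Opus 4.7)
My plan is to start directly from the formula \eqref{ecu5} that Selmer's formula combined with Corollary \ref{cor25} has already given us, namely
\[
\operatorname{g}(S_a(b,n)) \,=\, \frac{1}{a_1}\sum_{j=2}^n \frac{b^n+b^{n-(j-1)}}{2}\,a_j \,-\, \frac{a_1-1}{2},
\]
and to reduce it by pure arithmetic. Since every $a_j$ splits as $a_j = a_1 + a\,r_b(j-1)$, the numerator $\sum_{j=2}^n(b^n+b^{n-j+1})\,a_j$ decomposes into a part $A$ not depending on $a$ and a part $B$ linear in $a$, which I treat separately.

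For the first part I would write $A = a_1\sum_{j=2}^n (b^n + b^{n-j+1})$. The first sub-sum is just $(n-1)b^n$; the second is the geometric sum $b+b^2+\cdots+b^{n-1} = b\,r_b(n-1)$, and the identity $a_1 = 1 + b\,r_b(n-1)$ converts this into $a_1-1$. Hence $A = a_1(n-1)b^n + a_1(a_1-1)$, which is what we want.

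The interesting piece is $B = a\sum_{j=2}^n (b^n + b^{n-j+1})\,r_b(j-1)$. Substituting $r_b(j-1) = (b^{j-1}-1)/(b-1)$ and expanding the product, the two cross-terms $\pm b^n$ cancel, leaving the compact expression
\[
(b^n + b^{n-j+1})\,\frac{b^{j-1}-1}{b-1} \,=\, \frac{b^{n+j-1} - b^{n-j+1}}{b-1}.
\]
This is the key simplification, and the only step where one has to look carefully; once it is made the sums $\sum_{j=2}^n b^{n+j-1}$ and $\sum_{j=2}^n b^{n-j+1}$ are both geometric with common ratio $b$ and $n-1$ terms, so their difference equals $b(b^n-1)\,r_b(n-1)$. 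Dividing by $b-1$ and applying $a_1 = (b^n-1)/(b-1)$ and $b\,r_b(n-1) = a_1-1$ collapses $B$ to $a\,a_1\,(a_1-1)$.

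Putting $A$ and $B$ together and dividing by $2a_1$ gives $\bigl((n-1)b^n + (a_1-1)(1+a)\bigr)/2$, and subtracting $(a_1-1)/2$ produces $\bigl((n-1)b^n + (a_1-1)a\bigr)/2$, which is the claimed formula (noting that $a_1-1 = \sum_{j=1}^{n-1}b^j$, so this matches the statement in the introduction). I expect no conceptual obstacle: every manipulation is elementary once the telescoping identity above is spotted, and the $1/a_1$ cancels cleanly precisely because both $A$ and $B$ come out with $a_1$ as a factor.
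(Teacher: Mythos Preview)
Your proposal is correct and follows essentially the same route as the paper's own proof: both start from \eqref{ecu5}, split $a_j = a_1 + a\,r_b(j-1)$, and use the key identity $(b^n + b^{n-j+1})(b^{j-1}-1) = b^{n+j-1} - b^{n-j+1}$ to reduce the $a$-part to a difference of two geometric sums. The only cosmetic difference is that the paper cancels the $(a_1-1)/2$ term immediately after evaluating the $a$-free part, whereas you carry it to the end; the computations are otherwise line-for-line the same.
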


\begin{proof}
Since $a_j = a_1 + a\, r_b(j-1)$, by equation \eqref{ecu5}, we have that 
\begin{align*}
\operatorname{g}(S_a(b,n)) = & \frac{1}{2\, a_1}\, \sum_{j=2}^n (b^n+b^{n-(j-1)}) \, (a_1 + a\, r_b(j-1))  - \frac{a_1-1}{2}  = \\ = & \frac{1}2 \sum_{j=2}^n (b^n + b^{n-(j-1)}) + \frac{a}{2 a_1} \sum_{j=2}^n (b^n + b^{n-(j-1)})\, r_b(j-1) - \frac{a_1-1}2 = \\ = & \frac{(n-1)\, b^n}2 + \frac{a_1-1}2 + \frac{a}{2 a_1} \sum_{j=2}^n (b^n + b^{n-(j-1)})\, r_b(j-1) - \frac{a_1-1}2 = = \\ = & \frac{(n-1)\, b^n}2 + \frac{a}{2 a_1} \sum_{j=2}^n (b^n + b^{n-(j-1)})\, r_b(j-1).
\end{align*}
Now, taking into account that $r_b(j-1) = \frac{b^{j-1} - 1}{b-1}$, we obtain that
\begin{align*}
\operatorname{g}(S_a(b,n)) = & \frac{(n-1)\, b^n}2 + \frac{a}{2 a_1 (b-1)} \sum_{j=2}^n (b^n + b^{n-(j-1)})\, (b^{j-1} - 1) = \\ = &  \frac{(n-1)\, b^n}2 + \frac{a}{2 a_1 (b-1)} \sum_{j=1}^{n-1} (b^{n+j} - b^{n-j})  = \\ = &  \frac{(n-1)\, b^n}2 + \frac{a}{2 a_1 (b-1)} \Big( (b^n-1) \sum_{j=1}^{n-1} b^j \Big)  =\frac{(n-1)\, b^n + (a_1-1)\, a}2, 
\end{align*}
as claimed.
\end{proof}

\begin{example}\label{exa27}
If $a = b = 3$ and $n = 4$, then the grepunit semigroup $S = S_a(b,n)$ is minimally generated by $a_1 = 40, a_2 = 43, a_3 = 52$ and $a_4 = 79$. By Corollary \ref{cor25}, we have that $\sum_{\omega \in \operatorname{Ap}(S)} \omega = 54\, a_2 + 45\, a_3 + 42\, a_4 = 7980.$ So, by \eqref{ecu5}, we conclude that \[\operatorname{g}(S) = \frac{7980}{40} - \frac{39}{2} = 180.\] Note that, by Corollary \ref{cor26}, we can get $\operatorname{g}(S)$ without computing $\sum_{\omega \in \operatorname{Ap}(S)} \omega$.
\end{example}

\section{The type of grepunit semigroups. Wilf's conjecture}\label{Sect6}

Let $S$ be a numerical semigroup and let $\operatorname{n}(S)$ be the cardinality of $\{s \in S \mid s < \operatorname{F}(S)\}$. Clearly, $\operatorname{g}(s) + \operatorname{n}(S) = \operatorname{F}(S)+1$. In \cite{wilf}, {H.S.} Wilf conjectured that \[\operatorname{F}(S) \leq \operatorname{e}(S)\, \operatorname{n}(S) - 1,\] where $\operatorname{e}(S)$ denotes the embedding dimension of $S$. 

Although there are many families of numerical semigroups for which this conjecture is known to be true, the general case remains unsolved. The numerical semigroups that satisfy Wilf's conjecture are said to be Wilf (see for example the survey \cite{Delgado}).

In this section, we will prove that the grepunit semigroups are Wilf. Of course, we can use our formulas for the Frobenius number (Theorem \ref{teo22}) and for the genus (Corollary \ref{cor26}). However, we will follow a different approach: we will prove that the grepunit semigroups are Wilf as an immediate consequence of the computation of their pseudo-Frobenius numbers.

Recall that an integer $x$ is a pseudo-Frobenius number of a numerical semigroup $S$, if $x \not\in S$ and $x+S \in S$, for all $s \in S \setminus \{0\}$. We denote by $\operatorname{PF}(S)$ the set of pseudo-Frobenius numbers of $S$. The cardinality of $\operatorname{PF}(S)$ is called the type of $S$ and it is denoted $\operatorname{t}(S)$.

Given a numerical semigroup $S$, we write $\preceq_S$ for the partial order on $\mathbb{Z}$ such that $y \preceq_S x$ if and only if $x - y \in S$.

The following result is \cite[Proposition 2.20]{libro}.

\begin{proposition}\label{prop28}
Let $S$ be a numerical semigroup. If $s$ is a nonzero element of $S$, then
\[\operatorname{PF}(S) = \left\{\omega - s\ \mid\ \omega \in \operatorname{Maximals}_{\preceq S}\, \operatorname{Ap}\big(S,s\big) \right\}.\]
\end{proposition}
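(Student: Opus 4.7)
The plan is to prove the set equality by establishing both inclusions, using the correspondence $\omega \leftrightarrow x := \omega - s$ as the bookkeeping device. Under this map the Ap\'ery condition $\omega - s \notin S$ translates directly to $x \notin S$, while the $\preceq_S$-maximality of $\omega$ inside $\operatorname{Ap}(S,s)$ will correspond exactly to the defining property of pseudo-Frobenius numbers.

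For the inclusion $\subseteq$, I would take $x \in \operatorname{PF}(S)$ and set $\omega = x + s$. Since $s \in S \setminus \{0\}$, the pseudo-Frobenius property yields $\omega \in S$, while $\omega - s = x \notin S$ places $\omega$ in $\operatorname{Ap}(S,s)$. Maximality then follows by contradiction: if some $\omega' \in \operatorname{Ap}(S,s)$ strictly dominates $\omega$ under $\preceq_S$, then $\omega' - \omega \in S \setminus \{0\}$, so that $x + (\omega' - \omega) = \omega' - s$ would lie in $S$; but this is forbidden because $\omega' \in \operatorname{Ap}(S,s)$.

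For the converse inclusion, I would let $\omega$ be a $\preceq_S$-maximal element of $\operatorname{Ap}(S,s)$ and set $x = \omega - s$. The Ap\'ery condition immediately gives $x \notin S$. To verify the pseudo-Frobenius property, I would pick any $t \in S \setminus \{0\}$; then $\omega + t \in S$, and if $\omega + t$ were itself in $\operatorname{Ap}(S,s)$ the maximality of $\omega$ would be violated via $t = (\omega + t) - \omega \in S \setminus \{0\}$. Hence $\omega + t \notin \operatorname{Ap}(S,s)$, which, combined with $\omega + t \in S$, forces $x + t = (\omega + t) - s \in S$, exactly as required.

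The main conceptual pitfall, rather than a genuine obstacle, is to keep the partial order $\preceq_S$ distinct from the usual numerical ordering on $\mathbb{Z}$: an element $\omega$ that is maximal under $\preceq_S$ in $\operatorname{Ap}(S,s)$ need not be the numerically largest element of the Ap\'ery set, so one has to argue with the relation $\omega' - \omega \in S \setminus \{0\}$ rather than with inequalities. Once this is internalized, both inclusions reduce to short diagram chases as above.
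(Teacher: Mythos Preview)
Your argument is correct and is precisely the standard proof of this well-known fact. Note, however, that the paper does not supply its own proof of this proposition: it is quoted verbatim as \cite[Proposition 2.20]{libro} and used as a black box, so there is no ``paper's proof'' to compare against; what you have written is essentially the argument one finds in that reference.
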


As in the previous sections, let $a, b$ and $n$ three positive integers such that $b > 1, n > 1$ and $\gcd(r_b(n),a) = 1$.

\begin{proposition}\label{prop29}
The set of maximal elements of $\operatorname{Ap}\big(S_a(b,n)\big)$ with respect to $\preceq_{S_a(b,n)}$ is equal to  \[\Big\{a_i + (b-1) \sum_{j=i}^n a_j\ \mid\ i = 2, \ldots, n\Big\}.\]
\end{proposition}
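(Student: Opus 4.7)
The plan is to prove both inclusions. First, I would refine the proof of Lemma~\ref{lema21}(c) to the stronger assertion that every $\omega \in \operatorname{Ap}(S_a(b,n))$ satisfies $\omega \preceq_S \alpha_i$ for some $i \in \{2,\ldots,n\}$. Indeed, if $\omega = \sum_{j=2}^n u_j a_j$ is the canonical expression from Theorem~\ref{teo15} and $i$ is the leftmost index with $u_i > 0$, then condition (b) of Definition~\ref{def10}, applied to $u_i \neq 0$, forces $u_j \leq b-1$ for all $j > i$, while $u_i \leq b$. Therefore
\[
\alpha_i - \omega = (b - u_i) a_i + \sum_{j=i+1}^n (b-1-u_j) a_j \in S_a(b,n),
\]
so $\omega \preceq_S \alpha_i$. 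Hence every $\preceq_S$-maximal element of $\operatorname{Ap}(S_a(b,n))$ coincides with some $\alpha_i$.

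For the reverse inclusion I would argue by contradiction: suppose $\alpha_i \preceq_S \alpha_k$ with $k \neq i$ and set $s := \alpha_k - \alpha_i \in S_a(b,n) \setminus \{0\}$. Write $s = \sum_{j=1}^n v_j a_j$ with $v_j \in \mathbb{N}$. If $v_1 \geq 1$, then $\alpha_k - a_1 = \alpha_i + (v_1-1) a_1 + \sum_{j \geq 2} v_j a_j \in S_a(b,n)$, contradicting $\alpha_k \in \operatorname{Ap}(S_a(b,n))$; so $v_1 = 0$. The canonical $R(b,n)$-expression of $\alpha_i$ has length $\ell_i := b + (b-1)(n-i)$, and by homogeneity (Proposition~\ref{prop17}) every expression of $\alpha_k$ as a nonnegative combination of the $a_j$'s shares the common length $\ell_k$. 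Applied to $\alpha_k = \alpha_i + \sum_{j \geq 2} v_j a_j$, this yields
\[
\sum_{j=2}^n v_j \;=\; \ell_k - \ell_i \;=\; (i - k)(b - 1),
\]
forcing $k \leq i$. Since $k = i$ gives $s = 0$, I deduce $k < i$.

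To rule out $k < i$ I would split on the dichotomy in Lemma~\ref{lema21}(b). In the case $a > b^n - 1$ the lemma immediately gives $\alpha_k < \alpha_i$ for $k < i$, contradicting $s \geq 0$. In the case $a < b^n - 1$ the identity $\alpha_i - \alpha_{i+1} = b^n - 1 - a$ extracted inside the proof of Lemma~\ref{lema21}(b) telescopes to $s = (i-k)(b^n-1-a)$; on the other hand, expanding $s = \sum_{j \geq 2} v_j a_j$ via $a_j = a_1 + a\, r_b(j-1)$ and using $b^n - 1 = (b-1) a_1$ gives
\[
(i-k)(b-1) a_1 + a \sum_{j=2}^n v_j r_b(j-1) \;=\; (i-k)(b-1) a_1 - (i-k) a,
\]
whence $\sum_{j=2}^n v_j r_b(j-1) = -(i-k) < 0$, an absurdity. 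The main difficulty is that the homogeneity length constraint alone settles only the case $a > b^n-1$; handling $a < b^n-1$ requires combining it with the explicit telescoping formula for $\alpha_k - \alpha_i$ to extract a contradiction modulo $a$.
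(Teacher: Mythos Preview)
Your argument is correct. The first inclusion is exactly what the paper means by ``from the proof of Lemma~\ref{lema21}(c)''; you have simply made explicit that the difference $\alpha_i-\omega$ is a nonnegative combination of the generators.

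For the reverse inclusion your route is genuinely different from the paper's. The paper never invokes homogeneity: it uses the translation invariance $\alpha_j-\alpha_i=\alpha_{j'}-\alpha_{i'}$ whenever $i-j=i'-j'$ to reduce to a difference involving the extremal element $\alpha_n$ (case $a<b^n-1$) or $\alpha_2$ (case $a>b^n-1$), and then applies the syzygy $b\,a_m+a_k=b\,a_{k-1}+a_{m+1}$ from Lemma~\ref{lema10} (together with $a_{n+1}=(a+1)a_1$) to exhibit $a_1$ inside a factorization of the larger $\alpha$, contradicting membership in $\operatorname{Ap}(S)$. You instead combine two global constraints: the homogeneity of $S_a(b,n)$ (Proposition~\ref{prop17}) forces $\sum_{j\ge 2}v_j=(i-k)(b-1)$, which immediately settles the case $a>b^n-1$ via the monotonicity of the $\alpha_i$; for $a<b^n-1$ you feed this length identity into the expansion $a_j=a_1+a\,r_b(j-1)$ and the telescoped formula $\alpha_k-\alpha_i=(i-k)(b^n-1-a)$ to obtain $\sum_{j\ge 2}v_j\,r_b(j-1)=-(i-k)<0$, an impossibility. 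Your approach trades the explicit relation-chasing of the paper for a cleaner arithmetic identity, at the cost of importing Proposition~\ref{prop17}; both yield the same case split on the sign of $b^n-1-a$, but the mechanisms producing the contradiction are distinct.
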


\begin{proof}
Set $S:=S_a(b,n)$ and $\alpha_i := a_i + \sum_{j=i}^n (b-1)\, a_j,\ i = 2, \ldots, n$. 
From the proof of Lemma \ref{lema21}(c), we have that, for each $\omega \in \operatorname{Ap}(S)$, there exists $\alpha_i$ such that $\alpha_i - \omega \in S$. Therefore $\operatorname{Maximals}_{\preceq S} \operatorname{Ap}(S) \subseteq \{\alpha_2, \ldots, \alpha_n\}$. Now, since by Lemma \ref{lema21}, $\alpha_2 > \ldots > \alpha_n,$ if $a < b^n-1$, and $\alpha_2 < \ldots < \alpha_n,$ if $a > b^n-1$; in order to prove that reverse inclusion, we distinguish two cases:
\begin{itemize}
\item If $a < b^n-1$, then $\alpha_j - \alpha_i = (i-j) (b^n-(a+1)) = \alpha_{n-(i-j)} - \alpha_n = $, for every $i > j$. Now, if there exist $i > j$ such that $\alpha_i \preceq_S \alpha_j$, then $\alpha_{n-(i-j)} - \alpha_n \in S$. So, $\alpha_{n-(i-j)} = \alpha_n + \sum_{l=2}^n u_l a_l$ for some $u_l \in \mathbb{N},\ l = 2, \ldots, n$, not all zero. If $k \in \{2, \ldots, n\}$ is such that $u_k \neq 0$, then $\alpha_{n-(i-j)} = \alpha_n + a_k + s$, where $s = \sum_{l=2}^n u_l a_l-a_k \in S$. Thus, since $\alpha_n = b a_n$ and $b a_n + a_k = b a_{k-1} + (a+1) a_1$, we conclude that $\alpha_{n-(i-j)} - a_1 = b a_{k-1} + a\, a_1 + s \in S$, which is not possible because $\alpha_{n-(i-j)} \in \operatorname{Ap}(S)$. 
\item If $a > b^n-1$, then $\alpha_j - \alpha_i = (j-i) ((a+1)-b^n) = \alpha_{j-i+2} - \alpha_2,$ for every $i < j$. Now, if there exist $i > j$ such that $\alpha_i \preceq_S \alpha_j$, then $\alpha_{j-i+2} - \alpha_2 \in S$.  So, $\alpha_{j-i+2} = \alpha_2 + \sum_{l=2}^n u_l a_l = a_2 + \sum_{l=2}^n (u_l + b-1) a_l$, for some $u_l \in \mathbb{N},\ l = 1, \ldots, n$, not all zero. If $k \in \{2, \ldots, n\}$ is such that $u_k \neq 0$, then $\alpha_{n-(i-j)} = a_2 + b\, a_k + s$, where $s = \sum_{l=2}^n (u_l + b-1) a_l - b\, a_k \in S$.  Thus, since $a_2 + b\, a_k = b a_1 + a_{k+1}, $ if $k < n$, and $a_2 + b\, a_k = (b+a+1)\, a_1$, if $k = n$, we conclude that $\alpha_{j-i+2} - a_1 \in S$, a contradiction again.
\end{itemize}
Finally, since we have shown that $\alpha_i - \alpha_j \not\in S_a(b,n)$, for every $i,j \in \{2, \ldots, n\}$, we conclude that $\alpha_i \in \operatorname{Maximals}_{\preceq S}(\operatorname{Ap}(S))$, for every $i = 2, \ldots, n$, as desired. 
\end{proof}

\begin{corollary}\label{cor30}
The set of pseudo-Frobenius numbers of $S_a(b,n)$ is equal to \[\left\{(n-i+1)\,(b^n-1-a) + a\, a_1 \ \mid\ i = 2, \ldots, n \right\}.\] Consequently, the type of $S_a(b,n)$ is $n-1$.
\end{corollary}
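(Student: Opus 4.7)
The plan is to obtain the set of pseudo--Frobenius numbers directly from Propositions \ref{prop28} and \ref{prop29}, and then simplify the resulting expressions to the claimed closed form.

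First I would take $s = a_1 = \operatorname{m}(S_a(b,n))$ in Proposition \ref{prop28}. Since Proposition \ref{prop29} identifies the maximal elements of $\operatorname{Ap}(S_a(b,n))$ with respect to $\preceq_{S_a(b,n)}$ as $\alpha_i = a_i + (b-1)\sum_{j=i}^n a_j$ for $i = 2, \ldots, n$, this yields immediately
\[
\operatorname{PF}(S_a(b,n)) = \{\alpha_i - a_1 \mid i = 2, \ldots, n\}.
\]

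Next I would compute $\alpha_i - a_1$ in closed form. Lemma \ref{lema21}(b) already establishes the key telescoping identity $\alpha_i - \alpha_{i+1} = b^n - 1 - a$ for $i = 2, \ldots, n-1$, so
\[
\alpha_i = \alpha_n + (n-i)\,(b^n - 1 - a).
\]
The value $\alpha_n - a_1$ is nothing but $b\, a_n - a_1$, which was already shown in the proof of Theorem \ref{teo22}(b) to equal $b^n - 1 - a + a\, a_1$. Substituting, I obtain
\[
\alpha_i - a_1 = (n - i + 1)\,(b^n - 1 - a) + a\, a_1,
\]
which is exactly the claimed description of $\operatorname{PF}(S_a(b,n))$.

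Finally, to deduce that $\operatorname{t}(S_a(b,n)) = n - 1$, I need to verify that these $n-1$ values are pairwise distinct, i.e.\ that $b^n - 1 - a \neq 0$. This follows from the standing hypothesis $\gcd(r_b(n), a) = 1$: indeed $(b-1)\, r_b(n) = b^n - 1$, so $r_b(n) \mid b^n - 1$, and if $a$ were equal to $b^n - 1$ then $r_b(n)$ would divide $a$, forcing $\gcd(r_b(n), a) = r_b(n) > 1$ (since $n > 1$), a contradiction. There is no real obstacle here; all the difficult work has been done in Proposition \ref{prop29}, and the corollary is essentially a bookkeeping computation anchored on the telescoping identity of Lemma \ref{lema21}(b).
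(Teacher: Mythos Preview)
Your proof is correct and follows the same overall strategy as the paper: apply Propositions~\ref{prop28} and~\ref{prop29} to identify $\operatorname{PF}(S_a(b,n)) = \{\alpha_i - a_1 \mid i = 2, \ldots, n\}$, then simplify each $\alpha_i - a_1$ to the stated closed form. The only difference lies in how the simplification is carried out. The paper expands $\alpha_i - a_1$ directly, substituting $a_j = a_1 + a\, r_b(j-1)$ and reducing the resulting sums by hand. You instead recycle two facts already established earlier: the telescoping identity $\alpha_i - \alpha_{i+1} = b^n - 1 - a$ from Lemma~\ref{lema21}(b), and the value $\alpha_n - a_1 = b^n - 1 - a + a\, a_1$ computed in the proof of Theorem~\ref{teo22}(b). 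Your route avoids repeating algebra and makes the structure of the answer (an arithmetic progression anchored at $\alpha_n - a_1$) transparent; the paper's direct expansion is self-contained and does not rely on locating computations inside earlier proofs. You also make the distinctness argument for the type explicit, whereas the paper leaves it implicit via the remark $b^n - 1 \neq a$ in the proof of Lemma~\ref{lema21}(b).
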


\begin{proof}
By Propositions \ref{prop28} and \ref{prop29},
$\operatorname{PF}(S_a(b,n) = \{a_i + (b-1)\sum_{j=i}^n a_j-a_1\ \mid\ i = 2, \ldots, n \}.$ Now, taking into account that 
\begin{align*}
a_i + (b-1)\sum_{j=i}^n a_j-a_1 & = a\, r_b(i-1) + (b-1) (n-i+1) a_1 + (b-1)\, a\, \sum_{j=i}^n r_b(j-1) = \\ & =  a\, r_b(i-1) + (n-i+1)(b^n-1) + a\, \sum_{j=i}^n (b^{j-1}-1) = \\ & = a\, r_b(i-1) + (n-i+1)(b^n-1-a) + a\, ( a_1 - r_b(i-1)) = \\
& =  (n-i+1)(b^n-1-a) + a\, a_1, 
\end{align*}
for every $i \in \{2, \ldots, n\}$, and we are done.
\end{proof}

Finally, since, by \cite[Theorem 20]{FGH}, $\operatorname{F}(S) \leq (\operatorname{t}(S)+1)\, \operatorname{n}(S)-1$, for every numerical semigroup $S$, and, by Corollary \ref{cor30}, $\operatorname{t}(S_a(b,n)) + 1 = \operatorname{e}(S_a(b,n))$, we conclude that generalized repunit numerical semigroups are Wilf.  


\end{document}